\newtheorem{theorem}{Theorem}
\newtheorem{lemma}[theorem]{Lemma}
\newtheorem{corollary}[theorem]{Corollary}
\newtheorem{definition}[theorem]{Definition}
\newtheorem{observation}[theorem]{Observation}
\begin{document}

\title{On the general position numbers of maximal outerplanar graphs
\thanks{ The work is supported by NNSF of China (Grant No. 12271251), Postgraduate Research
\& Practice Innovation Program of Jiangsu Province (No. KYCX22\_0323) and the Interdisciplinary Innovation Fund for Doctoral Students of Nanjing University of Aeronautics and Astronautics (No. KXKCXJJ202204). \newline $\dag$: Corresponding author. Email addresses: jingtian526@126.com (J. Tian), kexxu1221@126.com or xukx1005@nuaa.edu.cn (K. Xu), chaodaikun0715@163.com (D. Chao). } }

\author{Jing Tian$^{a,b}$, Kexiang Xu$^{a,b,\dag}$, Daikun Chao$^c$}

\date{}

\maketitle
\vspace{-0.8 cm}
\begin{center}
{ \small  $^a$ College of Mathematics, Nanjing University of
 Aeronautics \& Astronautics,\\
 Nanjing  210016, China}\\
{ \small  $^b$ MIIT Key Laboratory of Mathematical Modelling and High Performance\\
 Computing of Air Vehicles, Nanjing 210016, China}\\
{\small $^c$ College of Automation Engineering,
 Nanjing University of Aeronautics\\ \& Astronautics, Nanjing 210016, China}

\end{center}

\begin{abstract}
A subset $R\subseteq V(G)$ of a graph $G$ is a general position set if any triple set $R_0$ of $R$ is non-geodesic in $G$, that is, no vertex of $R_0$ lies on any geodesic between the other two vertices of $R_0$ in $G$.
Let $\mathcal{R}$ be the set of general position sets of a graph $G$.
The general position number of a graph $G$, denoted by $gp(G)$, is defined as $gp(G)=\max\{|R|:R\in\mathcal{R}\}$.
In this paper, we determine the  bounds on the gp-numbers for any maximal outerplane graph and characterize the corresponding extremal graphs.
\end{abstract}

\noindent {\bf Keywords:} general position set; general position number; maximal outerplane graph

\medskip\noindent
{\bf AMS Math.\ Subj.\ Class.\ (2020)}: 05C12, 05C05, 05C76

\section{Introduction}
\label{S:intro}

All finite connected graphs considered in this paper are simple and undirected.
Let $G$ be a graph with vertex set $V(G)$ and edge set $E(G)$. For simplicity, set $n=|V(G)|$, the order of a graph $G$.
For a vertex $v\in V(G)$, let $N_G(v)$ and $N_G[v]$ denote the open neighborhood and the closed neighborhood of $v$, respectively; thus
$N_G(v)=\{u:uv\in E(G)\}$ and $N_G[v]=\{v\}\cup N_G(v)$. We denote by $d_G(v)=|N_G(v)|$ the degree of $v$. A vertex with degree $d$ is said to be
a \textit{$d$-vertex} in $G$. By $\Delta(G)$ we denote the maximal degree of $G$.
The \textit{distance} $d_G(x,y)$, as usual, is the number of edges on a shortest path between $x$ and $y$ in $G$.
A shortest $x,y$-path is called an $x,y$-\textit{geodesic} in $G$.
For any positive integer $i\geq 2$, let $P_i$ be a path on $p_1$, $p_2$, \ldots, $p_i$ with natural adjacencies.
As usual, $K_n$ denotes the complete graph on $n$ vertices.

A graph is \textit{planar} if it can be embedded in the plane so that no two edges intersect geometrically except at a vertex to which they are both incident.
A \textit{plane graph} is a planar graph with a fixed embedding in the plane.
A plane graph divides the plane into connected regions called \textit{faces}.
The unbounded region is called the \textit{outer face} and each bounded region is called an \textit{inner face}.
A plane graph $G$ is \textit{outerplanar} if it has an embedding in the plane such that all vertices belong to the boundary of its outer face (the unbounded face).
An outerplanar graph $G$ is \textit{maximal} if $G+uv$ is not outerplanar for any two non-adjacent vertices $u$ and $v$ of $G$ and such graph is called a \textit{maximal outerplanar graph} (or just an \textit{MOP} in \cite{Deng:2012}).
A maximal outerplanar graph embedded in the plane is called a \textit{maximal outerplane graph}.

We note that any MOP has a unique Hamiltonian cycle \cite{Rourke:1987} and any MOP is a triangulation graph (or a triangulated disc), that is, a plane graph such that all its faces, except the outer face, are bounded by $K_3$.
Let $f$ be an inner face of a maximal outerplane graph $G$. Then $f$ is isomorphic to a $K_3$.
If $f$ is adjacent to the outer face, i.e., there exists at least one common edge between $f$ and outer face, then we say that $f$ is a \textit{marginal triangle};
otherwise we say that $f$ is an \textit{internal triangle}.
An MOP $G$ without internal triangles is called \textit{striped}.

Since the definition of outerplanar graph was proposed, it has been intensively studied \cite{Bandelt:1986,chvatal-1975,Holst:2007,Honjo:2010,lemanska:2019,Matheson:1996,Tokunaga:2019}.
Motivated with the application of maximal outerlpanar graph, we consider the general position problem for maximal outerlpanar graph in this paper.

The general position problem in graphs was recently introduced by Manuel and Klav\v{z}ar \cite{Manuel:2018} and is now well studied in graph
theory (see, for example, \cite{Hua,Klavzar:2021,klavzar:2021,S.Klavzar-2021,manuel,Neethu:2021,patkos:2020,Tian} for recent papers on this topic).
A subset $R\subseteq V(G)$ of a graph $G$ is a \textit{general position set} if any triple set $R_0$ of $R$ is \textit{non-geodesic} in $G$, that is, no vertex of $R_0$ lies on any geodesic between the other two vertices of $R_0$ in $G$.
Let $\mathcal{R}$ be the set of general position sets of a graph $G$.
The \textit{general position number} of a graph $G$, denoted by $gp(G)$, is defined as $gp(G)=\max\{|R|:R\in\mathcal{R}\}$.
The general position number will be denoted by gp-number briefly.
The general position set of order $gp(G)$ will be called \textit{gp-set} of a graph $G$.

By now some results on the gp-number are obtained for various classes of graph operation.
Several general bounds on the gp-number were presented \cite{Manuel:2018}.
The gp-numbers were determined in \cite{manuel:2018} for a large class of subgraphs of the infinite grid graph and the infinite diagonal grid graph.
And in \cite{Anand:2019}, a characterization of general position set was given and the gp-numbers of bipartite graph and its complement graph were proved.
Recently the gp-numbers in Cartesian products of graphs have been further investigated in \cite{Ghorbani:2021,klavzar-2021,Tian:2021,tian:2021}.
A sharp lower bound on the gp-number for the Cartesian products of graphs was determined and the gp-numbers for joins of graphs, coronas over
graphs, and line graphs of complete graphs were also characterized \cite{Ghorbani:2021}.
The gp-number of the Cartesian product of $n$-dimensional grid graph was proved in \cite{klavzar-2021}.
In particular, it was proved in \cite{tian:2021} that the upper bound on gp-number for Cartesian products of two graphs is sharp and the equality holds if and only if two graphs are both generalized complete graphs.
Moreover, the authors showed that  the gp-number is additive on Cartesian products of trees \cite{Tian:2021}.
In addition, the gp-numbers of other product graphs were investigated and connected with strong resolving graphs \cite{Klavzar-2021}.

In this paper we consider the gp-number of maximal outerplane graphs. Some definitions and basic results are given in Section 2.
In subsequent Section 3, we determine the bounds on the gp-numbers for any  plane graph and characterize its corresponding extremal graphs.
In Section 4, we determine an upper bound on the gp-number for any maximal outerplane graph, and also give some corresponding extremal graphs when the upper bound is achieved.
In addition, we also prove that the bounds on the gp-number  for a maximal outerplane graph containing internal triangles or not, respectively.

\section{Preliminaries}\label{pre}
\label{S:prel}

In this section, we define some concepts and introduce the notations, as well as some results needed later.

For a subset $S\subseteq V(G)$, let $G[S]$ be the induced subgraph of a graph $G$ by $S$.
Let $H_G$ (for short $H_G=H$) be a Hamiltonian cycle of a maximal outerplane graph $G$. Suppose $u$ and $v$ are two vertices on $H$. We call the path on $H$ that goes in the clockwise direction from $u$ to $v$ the \textit{$u,v$-segment} of $H$, and other path the \textit{$v,u$-segment} of $H$, denoted by $S_{uv}$ and $S_{vu}$, respectively.
The \textit{interval} $I_{G}(u,v)$ between vertices $u$ and $v$ is a vertex subset which consists of all vertices lying on some $u,v$-geodesic of  $G$, that is,
$I_{G}(u,v)=\{w:d_G(u,v)=d_G(u,w)+d_G(w,v),w\in V(G)\}$.

A graph $F$ is said to be a \textit{minor} of a graph $G$ if $F$ can be obtained from $G$ by a series of vertex deletions, edge deletions, and edge contractions.
Given a graph $F$, a graph $G$ is said to be \textit{$F$-minor-free} if no minor of $G$ is isomorphic to $F$.
A graph $G$ is outerplanar if and only if it does not contain $K_4$ and $K_{2,3}$ as minors (\cite{syslo:1979}), thus any MOP is $K_4$-minor-free and $K_{2,3}$-minor-free.

 Next, we will give the definition of fan which is important for characterizing of extremal graphs of our main results.

\begin{definition}
A connected graph $G$ of order at least 3 is a fan if it has
only one $(n-1)$-vertex, only two 2-vertices, other vertices (if exist) have degree 3.
\end{definition}

For any positive integer $n\geq 2$, it is observed that a \textit{fan} is $\{v\}\oplus P_{n-1} $ (the join of an isolated vertex $v$ and $P_{n-1}$), denoted by $F_{n-1}$,
 and here the vertex $v$ is called \textit{central vertex} of fan.
A \textit{maximal fan subgraph} of an MOP is a fan which cannot be enlarged by adding a vertex.

In this paper, we always set $[n]=\{1,2,\ldots,n\}$ for a positive integer $n$.
For notations and terminologies not defined here, see \cite{Bondy:1976}. With the above concepts and notations we can recall the following known results.

\begin{lemma}(\cite{Campos:2013})\label{In-k}
Let $G$ be a maximal outerplane graph of order $n\geq4$. If $G$ has $k$ internal triangles, then $G$ has $k+2$ 2-vertices.
\end{lemma}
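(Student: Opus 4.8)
The plan is to analyze $G$ through its \emph{weak dual tree} $T$, whose vertices are the inner faces (triangles) of $G$, with two triangles adjacent precisely when they share an edge. Since $G$ is a maximal outerplane graph of order $n$, it has $2n-3$ edges, of which $n$ lie on the Hamiltonian cycle $H$ (the boundary) and $n-3$ are chords; by Euler's formula $G$ then has exactly $n-2$ inner faces, each bounded by a $K_3$. Each chord is shared by exactly two triangles while each boundary edge belongs to exactly one, so $T$ has $n-2$ vertices and $n-3$ edges; being connected (the triangulated disc is connected), $T$ is a tree. Moreover every triangle has only three sides, so $d_T(x)\le 3$ for all $x\in V(T)$.

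First I would identify the internal triangles with the degree-$3$ vertices of $T$. A triangle is internal exactly when none of its three edges lies on the outer face, i.e.\ all three are chords, i.e.\ it is adjacent in $T$ to three other triangles. Hence the number of degree-$3$ vertices of $T$ equals $k$, the number of internal triangles.

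Next I would set up a bijection between the $2$-vertices of $G$ and the leaves of $T$. Because $H$ is Hamiltonian every vertex has degree at least $2$, and a vertex $v$ satisfies $d_G(v)=2$ exactly when its two neighbors $u,w$ are its cycle-neighbors and $uw$ is a chord (this uses $n\ge 4$, so $uvw$ is not all of $H$); then $uvw$ is the unique triangle at $v$, it carries the two boundary edges $uv,vw$ and the single chord $uw$, and so corresponds to a leaf of $T$. Conversely, a leaf triangle has exactly one chord and hence two boundary edges meeting at a vertex that the chord separates from the rest of $G$, forcing that vertex to have degree $2$. The assignment $v\mapsto uvw$ is therefore a bijection, and the number of $2$-vertices equals the number $\ell$ of leaves of $T$.

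Finally I would combine these facts with the standard degree-sum identity for a tree, $\sum_{x\in V(T)}(d_T(x)-2)=2(n-3)-2(n-2)=-2$. Since $T$ has only vertices of degree $1,2,3$, this reduces to $-\ell+k=-2$, giving $\ell=k+2$. Hence $G$ has exactly $k+2$ vertices of degree $2$, as claimed. The only delicate point is the bijection in the third step: one must verify that a degree-$2$ vertex is genuinely an ear tip (using that its two neighbors are forced to be its cycle-neighbors) and that distinct $2$-vertices yield distinct leaf triangles; everything else is bookkeeping with Euler's formula and the degree sum of a tree.
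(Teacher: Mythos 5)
Your proof is correct, and there is in fact nothing in the paper to compare it against: the paper states this lemma without proof, quoting it from \cite{Campos:2013}, so your weak-dual argument supplies a proof the paper omits. The argument is the standard one for triangulated discs and all of its steps check out: the inner dual has $n-2$ vertices (the $n-2$ inner triangles) and $n-3$ edges (one per chord, each chord bordering exactly two inner faces), is connected, hence is a tree $T$ with $\Delta(T)\le 3$; internal triangles are exactly the degree-$3$ nodes of $T$; and the degree-sum identity $\sum_{x\in V(T)}\bigl(d_T(x)-2\bigr)=2(n-3)-2(n-2)=-2$ yields $\ell=k+2$ once leaves are identified with $2$-vertices. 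The two delicate points you flag do hold and are easy: a degree-$2$ vertex $v$ has only its two cycle-neighbours $u,w$ as neighbours, and the unique inner face at $v$ is a triangle, forcing $uw\in E(G)$, a chord since $n\ge 4$; and the map $v\mapsto uvw$ is injective because a single triangle containing two distinct $2$-vertices would force $u=x$ for the respective cycle-neighbours and hence $n=3$, contradicting $n\ge4$. The only step you state a little quickly is the connectedness of the weak dual, but this is standard for a triangulation of a polygon (a curve between interior points of two faces, avoiding vertices, crosses a sequence of chords giving a dual path). Beyond making the paper self-contained, your route buys the finer structural dictionary (leaves of $T$ $=$ ears of $G$, degree-$3$ dual nodes $=$ internal triangles), which is exactly the picture used implicitly in the paper's later arguments about striped maximal outerplane graphs, where the absence of internal triangles corresponds to the dual tree being a path.
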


\begin{lemma}(\cite{Campos:2013})\label{Diam}
If $G$ is a maximal outerplane graph of order $n\geq3$, then $G$ has $n-1$ faces and the Hamiltonian cycle has $n-3$ chords.
\end{lemma}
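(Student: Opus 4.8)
The plan is to combine Euler's formula with a double count of edge--face incidences, which delivers both conclusions at once. Throughout, recall that a maximal outerplane graph $G$ is a triangulation of the disc bounded by its unique Hamiltonian cycle $H$, so that every face of $G$ except the outer one is a triangle, and that the edge set of $G$ splits into the $n$ edges of $H$ together with the chords of $H$. Write $c$ for the number of chords; then $|E(G)| = n + c$, and if $F$ denotes the total number of faces (including the outer face), then the number of inner (triangular) faces is $F - 1$.

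First I would apply Euler's formula $|V(G)| - |E(G)| + F = 2$ to the connected plane graph $G$. With $|V(G)| = n$ and $|E(G)| = n + c$ this simplifies to
\begin{equation}\label{eq:euler}
F = c + 2 .
\end{equation}
Next I would count, in two ways, the incidences between edges and inner faces. Each inner face is a triangle and hence incident with exactly three edges, giving $3(F-1)$ incidences in total. On the other hand, $G$ is $2$-connected, so every edge lies on the boundary of exactly two faces; an edge of $H$ borders the outer face and one inner face, while a chord borders two inner faces. Thus each of the $n$ edges of $H$ contributes one incidence and each of the $c$ chords contributes two, giving $n + 2c$ incidences. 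Equating the two counts yields
\begin{equation}\label{eq:count}
3(F-1) = n + 2c .
\end{equation}
Substituting \eqref{eq:euler} into \eqref{eq:count} gives $3(c+1) = n + 2c$, whence $c = n - 3$ and $F = n - 1$, as claimed.

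The step that needs the most care is the double count in \eqref{eq:count}, specifically the assertion that every edge of $H$ borders exactly one inner face and every chord borders exactly two. This is where the structure of an MOP is used: the uniqueness of the Hamiltonian cycle makes the notion of a chord well defined, the outer face is bounded precisely by $H$, and $2$-connectedness guarantees that each edge lies on the boundary of two distinct faces. Once these facts are in place the arithmetic is immediate.

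As an alternative I would establish both equalities simultaneously by induction on $n$. The base case $n = 3$ is $K_3$, which has $2 = n-1$ faces and no chords $(= n-3)$. For the inductive step, Lemma~\ref{In-k} guarantees a $2$-vertex $v$; its two neighbours $u,w$ are adjacent because the triangle containing $v$ is a marginal face, so $G - v$ is again a maximal outerplane graph, now of order $n-1$. Deleting $v$ removes exactly the two edges $uv$ and $vw$ and turns the former boundary edge $uw$ into a chord, so passing from $G-v$ to $G$ increases both the number of faces and the number of chords by exactly one; the inductive hypothesis then gives $F = (n-2)+1 = n-1$ and $c = (n-4)+1 = n-3$.
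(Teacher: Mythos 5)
Your proof is correct, but note that the paper itself offers no proof to compare against: Lemma~\ref{Diam} is quoted from \cite{Campos:2013} as a known fact about maximal outerplane graphs. Your main argument is the standard and fully rigorous one: Euler's formula gives $F=c+2$ once you write $|E(G)|=n+c$, and the edge--face incidence count $3(F-1)=n+2c$ is correctly justified --- the outer face is bounded precisely by the Hamiltonian cycle $H$, so each of the $n$ cycle edges borders one inner face while each chord borders two, and $2$-connectedness ensures every edge lies on two distinct faces. The arithmetic then yields $c=n-3$ and $F=n-1$ (with the outer face included, which matches the paper's convention: it later uses $n-2$ inner triangular faces in the proof of Lemma~\ref{I-U}). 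Your inductive alternative via ear removal is also sound, with two small caveats: the assertion that $G-v$ is again a maximal outerplane graph, while routine, deserves a line of justification; and invoking Lemma~\ref{In-k} to produce the $2$-vertex is slightly heavy machinery (and itself only cited in this paper), since the existence of a $2$-vertex follows directly from the fact that a triangulated polygon on $n\geq 4$ vertices has an ear --- using it creates no circularity here but would in a self-contained development where Lemma~\ref{In-k} is proved by a similar induction.
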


\begin{lemma}(\cite{Campos:2013})\label{C-M}
Let $v$ be any vertex of a maximal outerplane graph $G$. Then $N_G[v]$ forms a maximal fan in $G$ with $v$ as the central vertex.
\end{lemma}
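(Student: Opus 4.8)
The plan is to exploit two structural features of a maximal outerplane graph $G$ that are already recorded in the preliminaries: $G$ is a triangulation of a polygon (every inner face is bounded by a $K_3$), and $G$ is $K_4$-minor-free. Fix the vertex $v$ and write $d:=d_G(v)$, so $|N_G[v]|=d+1$. Since $G$ is outerplanar, every vertex — in particular $v$ — lies on the boundary of the outer face; consequently the $d$ edges incident with $v$ carry a natural \emph{linear} order $vu_1,vu_2,\ldots,vu_d$ in the embedding, in which $vu_1$ and $vu_d$ are the two boundary (Hamiltonian-cycle) edges at $v$ and the intermediate ones are chords. Note $d\ge 2$ because an MOP is $2$-connected, so the candidate fan will have order $d+1\ge 3$ as required by the definition.

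First I would show that angularly consecutive neighbors are adjacent, i.e. $u_iu_{i+1}\in E(G)$ for every $i\in[d-1]$. Between the two consecutive edges $vu_i$ and $vu_{i+1}$ there lies exactly one inner face of $G$ (this is where $2$-connectivity is used, to guarantee the wedge is a single face); since $G$ is a triangulation, that face is a $K_3$, and its third side is precisely $u_iu_{i+1}$. Hence $u_1u_2\cdots u_d$ is a path $P_d$ and $v$ is adjacent to each $u_k$, so $G[N_G[v]]$ already contains the fan $\{v\}\oplus P_d$ with $v$ as central vertex (when $d=2$ this is just the marginal triangle $K_3$).

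Next I would rule out any further edge among the neighbors, that is, show $u_iu_j\notin E(G)$ whenever $j\ge i+2$. Suppose such a chord existed. Consider the four branch sets $\{v\}$, $\{u_i\}$, $\{u_j\}$ and $\{u_{i+1},\ldots,u_{j-1}\}$, the last of which is nonempty and connected through the path of consecutive neighbors established above. The edges $vu_i$, $vu_j$, $vu_{i+1}$, $u_iu_{i+1}$, $u_{j-1}u_j$ from the previous step, together with the hypothetical chord $u_iu_j$, realize all six adjacencies between these branch sets, producing a $K_4$ minor and contradicting that $G$ is $K_4$-minor-free. Therefore the only edges of $G[N_G[v]]$ are $vu_k$ for $k\in[d]$ and $u_ku_{k+1}$ for $k\in[d-1]$, so $G[N_G[v]]$ is exactly the fan $\{v\}\oplus P_d$ centered at $v$.

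Finally, for maximality: since $N_G[v]$ already contains every neighbor of $v$, and any fan centered at $v$ forces $v$ to be adjacent to all of its other vertices, no vertex $w\notin N_G[v]$ can be appended while keeping $v$ central. Hence the fan cannot be enlarged and is maximal with $v$ as central vertex. I expect the main obstacle to be the first step — converting the statement ``all inner faces are triangles'' into the precise claim that the unique face wedged between two angularly consecutive edges at $v$ contributes the edge $u_iu_{i+1}$ — which needs a careful appeal to the planar embedding and to $2$-connectivity; the minor argument in the second step and the maximality check are then routine.
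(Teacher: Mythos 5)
The paper itself does not prove this lemma: it is imported verbatim from Campos and Wakabayashi \cite{Campos:2013}, so there is no internal proof to compare against, and your blind attempt must be judged on its own. So judged, it is correct and self-contained, and it follows what is essentially the standard route: read off the rotation of edges at $v$, use the triangulated inner faces to get the path $u_1u_2\cdots u_d$ on $N_G(v)$, and use $K_4$-minor-freeness to exclude any further chord $u_iu_j$ with $j\ge i+2$ (contracting the nonempty, connected middle set $\{u_{i+1},\ldots,u_{j-1}\}$ does produce all six required adjacencies, so that step is airtight, and arguably cleaner than the direct planarity argument that such a chord would cut $u_{i+1},\ldots,u_{j-1}$ off from the outer face). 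Two refinements on the step you yourself flagged as delicate: in any plane graph, exactly one face lies in the wedge between two edges consecutive in the rotation at $v$, so $2$-connectivity is not needed for that; what it actually buys you (via the Hamiltonian cycle, as the paper notes every MOP has one) is that the outer face boundary is a cycle on which $v$ appears exactly once, hence the outer face occupies exactly one wedge at $v$ --- the one between $vu_1$ and $vu_d$ --- and each of the remaining $d-1$ wedges is an inner face, i.e.\ a triangle incident with both $vu_i$ and $vu_{i+1}$, which is what supplies the edge $u_iu_{i+1}$. Finally, your maximality check only excludes enlargements that keep $v$ central; this matches the lemma's phrasing (``maximal fan in $G$ with $v$ as the central vertex''), and in any case for $|N_G[v]|\ge 5$ any fan properly containing $G[N_G[v]]$ is forced to keep $v$ central, since $v$ would have degree at least $4$ in it while non-central vertices of a fan have degree at most $3$. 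So there is no gap of substance.
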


\begin{lemma}(\cite{Anand:2019})\label{GP-C}
Let $G$ be a connected graph. Then a vertex subset $S$ is a general position set if and only if the components of $G[S]$ are
complete subgraphs, the vertices of which form an in-transitive, distance-constant partition of $S$.
\end{lemma}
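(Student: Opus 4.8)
The plan is to prove the biconditional directly from the definition of a general position set, analyzing triples of $S$ according to how they meet the components of $G[S]$. For the forward direction, assume $S$ is a general position set and establish the three structural properties in turn. First I would show each component of $G[S]$ is complete: if a component $C$ were not complete, then the induced connected non-complete graph $G[C]$ contains two vertices $u,w$ at distance $2$ with a common neighbor $v\in C$; since $G[C]$ is induced we have $uw\notin E(G)$, hence $d_G(u,w)=2$ and $v$ lies on the geodesic $u\,v\,w$, making $\{u,v,w\}$ a geodesic triple of $S$ --- a contradiction. Next I would verify the component partition is distance-constant: for adjacent $u,u'$ in one component and $v$ in another, the bound $|d_G(u,v)-d_G(u',v)|\le d_G(u,u')=1$ together with the fact that neither $u$ nor $u'$ may lie between the other and $v$ forces $d_G(u,v)=d_G(u',v)$, and repeating this in the second coordinate yields a well-defined value $\hat d(S_i,S_j)$. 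Finally in-transitivity follows by picking one representative from each of three distinct components: the resulting triple lies in $S$, and the three betweenness relations forbidden by general position become, after substituting the constant cross-component distances, exactly the inequalities $\hat d(S_i,S_j)+\hat d(S_j,S_k)\ne\hat d(S_i,S_k)$.

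For the converse, assume the structural conditions and take an arbitrary triple $\{a,b,c\}\subseteq S$, splitting into cases by its distribution among the components. If at least two of the vertices lie in a common component, then some pairwise distance equals $1$ while all distances are positive, and in each potential betweenness equation the required identity reduces either to a sum of positive distances equalling $1$ or, when exactly two of them coincide, to $1+\hat d=\hat d$ via distance-constancy --- both impossible. If the three vertices lie in three distinct components, each candidate betweenness relation is precisely one of the equations excluded by in-transitivity. Hence every triple is non-geodesic and $S$ is a general position set.

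I expect the completeness-of-components step to be the main obstacle to state cleanly, as it is where one passes from a condition on triples to a global structural claim and must use carefully that $G[S]$ is an induced subgraph in order to guarantee non-adjacency of the distance-$2$ pair; the other two properties then reduce to short distance estimates. A secondary point needing care is ensuring distance-constancy is verified in both coordinates rather than only one, so that $\hat d(S_i,S_j)$ is genuinely well defined.
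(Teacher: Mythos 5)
Your proof is correct. The paper offers no proof of this lemma---it is quoted directly from \cite{Anand:2019}---and your argument is essentially the one given in that source: completeness of each component via a distance-two pair with a common neighbour in a connected non-complete induced subgraph, distance-constancy from the triangle inequality plus the forbidden betweenness relation applied in each coordinate separately, in-transitivity via one representative per component, and the converse by the same case analysis on how a triple meets the parts.
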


\begin{lemma}\label{K-G}(\cite{Chandran:2016})
Let $G$ be a connected graph of order $n\geq 4$. Then
\begin{itemize}
\item [$(i)$] $gp(G)=n$ if and only if $G\cong K_n$;
\item[$(ii)$] $gp(G)=n-1$ if and only if $G\cong K_{n}^{k}$ with $1\leq k\leq n-3$ or $G$ is a non-trivial generalized complete graph.
\end{itemize}
\end{lemma}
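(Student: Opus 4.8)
The plan is to derive both parts from the structural characterization of general position sets in Lemma \ref{GP-C}, combined with the elementary observation that, once $G\not\cong K_n$ is known, any single gp-set of size $n-1$ already certifies $gp(G)=n-1$. So the whole argument reduces to understanding which graphs admit a gp-set whose complement is a single vertex.

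For part $(i)$ the reverse implication is immediate: in $K_n$ all pairwise distances equal $1$, so no vertex lies strictly between two others, whence $V(K_n)$ is a general position set and $gp(K_n)=n$. For the forward implication, if $gp(G)=n$ then $V(G)$ itself is a gp-set, so by Lemma \ref{GP-C} the components of $G[V(G)]=G$ are complete subgraphs; since $G$ is connected it has a single component, which must therefore be complete, i.e.\ $G\cong K_n$.

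For part $(ii)$ I would first record the reduction: by part $(i)$, $gp(G)=n-1$ holds if and only if $G\not\cong K_n$ and $G$ admits a gp-set of the form $R=V(G)\setminus\{v\}$. Applying Lemma \ref{GP-C} to $R$, the graph $G-v$ splits into complete components $C_1,\ldots,C_p$ whose classes form an in-transitive, distance-constant partition of $R$ in $G$, and I would then distinguish by $p$. When $p=1$, $G-v\cong K_{n-1}$, so every non-edge of $G$ is incident with $v$; writing $k$ for the number of non-neighbours of $v$, connectivity gives $1\le k\le n-2$ while $G\not\cong K_n$ gives $k\ge 1$, and for $1\le k\le n-3$ this is exactly the family $K_n^k$. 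The extreme value $k=n-2$ makes $v$ a pendant vertex; this graph is excluded from $K_n^k$ and instead surfaces among the generalized complete graphs, since removing the \emph{support} vertex (rather than the pendant) leaves two cliques at constant distance $2$. When $p\ge 2$, the in-transitive distance-constant clique partition of $G-v$ together with the way $v$ attaches is precisely the defining structure of a non-trivial generalized complete graph. For the reverse direction one checks, for each listed family, that $R=V(G)\setminus\{v\}$ meets the hypotheses of Lemma \ref{GP-C}, so $gp(G)\ge n-1$, while $G\not\cong K_n$ forces $gp(G)\le n-1$ by part $(i)$.

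The verifications that the two listed families satisfy Lemma \ref{GP-C} are routine. The delicate point, and the main obstacle, is the exhaustiveness of the forward direction of part $(ii)$: one must argue that the componentwise-complete, in-transitive, distance-constant description of $G-v$ admits no possibility beyond the single-clique graphs $K_n^k$ and the multi-clique generalized complete graphs, and in particular must treat the threshold value $k=n-3$ (and the pendant case $k=n-2$) with care so that the two families are genuinely exhaustive and their overlap is assigned consistently with the definitions.
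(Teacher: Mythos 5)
First, note that the paper itself contains no proof of this lemma: it is imported verbatim from \cite{Chandran:2016}, so there is no internal argument to compare you against, and moreover the paper never defines $K_n^k$ or ``generalized complete graph.'' Your implicit reading of these terms (namely $K_n^k$ is $K_n$ minus $k$ edges all incident with a common vertex, and a generalized complete graph is $\{v\}\oplus(Q_1\cup\cdots\cup Q_p)$, a vertex joined to a disjoint union of cliques, non-trivial meaning $p\geq 2$) is an unstated assumption that must match the source; it is in fact the only reading under which the statement is true --- for instance, $K_n$ minus a matching of size $k\geq 2$ has $gp<n-1$, since deleting any one vertex leaves an induced subgraph whose components are not all complete, so a ``matching'' reading of $K_n^k$ would falsify the lemma. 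Your part $(i)$ is correct and complete: Lemma \ref{GP-C} applied to $R=V(G)$ together with connectivity does give $G\cong K_n$, and the converse is trivial.

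The genuine gap is exactly where you flag it: in part $(ii)$ the case $p\geq 2$ is asserted (``the way $v$ attaches is precisely the defining structure'') rather than proved, and your closing paragraph concedes that exhaustiveness is ``the main obstacle'' without resolving it --- but this is the entire content of the forward direction, so as written the proposal does not establish the lemma. The missing step is short and worth recording. Let $R=V(G)\setminus\{v\}$ be a gp-set and let $C_1,\ldots,C_p$ ($p\geq2$) be the (complete) components of $G-v$; then $v$ is a cut vertex and every geodesic between different components passes through $v$. If some $x\in C_i$ had $d_G(x,v)\geq 2$, the second vertex $w$ of an $x,v$-geodesic would lie in $N_G(x)\subseteq C_i\cup\{v\}$, hence $w\in C_i\subseteq R$, and $w$ would be an interior vertex of an $x,y$-geodesic for every $y\in C_j$ with $j\neq i$, contradicting that $R$ is a general position set. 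Hence every vertex of $R$ is adjacent to $v$, i.e.\ $G\cong\{v\}\oplus(C_1\cup\cdots\cup C_p)$, which is precisely a non-trivial generalized complete graph; the distance-constancy and in-transitivity conditions of Lemma \ref{GP-C} then hold automatically since all inter-component distances equal $2$. With this argument inserted, your dichotomy $p=1$ versus $p\geq2$ is exhaustive, your treatment of $p=1$ (all non-edges at $v$, giving $K_n^k$ for $1\leq k\leq n-3$, with the pendant case $k=n-2$ reassigned to the generalized complete graphs via the support vertex) is correct, and the two families are disjoint, so the proof closes.
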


It is easily seen that $gp(G)\geq 3$ for any maximal outerplane graph $G$. Note that $G$ is 2-connected since a maximal outerplane graph has Hamiltonian cycle. Then we have $\Delta(G)\geq 2$ for any maximal outerplane graph $G$.
Next, we will give a general lower bound on the gp-number for any maximal outerplane graph $G$ with respect to $\Delta(G)$.

\begin{lemma}\label{D-M}
Let $G$ be a maximal outerplane graph with $\Delta(G)\geq 2$. Then $gp(G)\geq \lfloor\frac{2(\Delta(G)+1)}{3}\rfloor$.
\end{lemma}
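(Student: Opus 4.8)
The plan is to work inside the closed neighborhood of a vertex of maximum degree, where Lemma~\ref{C-M} pins down the local structure exactly. Let $v$ be a vertex with $d_G(v)=\Delta(G)=:\Delta$. By Lemma~\ref{C-M}, $G[N_G[v]]$ is a maximal fan with central vertex $v$, so I may label $N_G(v)=\{p_1,p_2,\ldots,p_\Delta\}$ in such a way that $p_1p_2\cdots p_\Delta$ is precisely the path $P_\Delta$ of non-central vertices of this fan. The only metric information I will need is: each $p_i$ is adjacent to $v$, so $d_G(p_i,p_j)\le 2$ for all $i,j$; and since the fan is an \emph{induced} subgraph, $p_i$ and $p_j$ are adjacent in $G$ if and only if $|i-j|=1$. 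Consequently $d_G(p_i,p_j)=1$ when $|i-j|=1$ and $d_G(p_i,p_j)=2$ when $|i-j|\ge2$; note that no isometry claim about the whole fan is required.

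Next I would exhibit the set
\[
S=\{\,p_i : 1\le i\le\Delta,\ i\not\equiv 0 \pmod 3\,\},
\]
that is, I keep $p_1,p_2$, drop $p_3$, keep $p_4,p_5$, drop $p_6$, and so on (a ``keep two, skip one'' rule along the path). By the distance facts above, the retained pairs $\{p_{3k+1},p_{3k+2}\}$ each induce a $K_2$, these pairs are exactly the connected components of $G[S]$ (consecutive retained indices from different pairs differ by $2$, hence are non-adjacent), and any two vertices lying in different components are at distance exactly $2$ in $G$.

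To confirm $S\in\mathcal R$ I would apply Lemma~\ref{GP-C}. The components of $G[S]$ are the $K_2$'s (and possibly one final singleton $K_1$), hence complete; the induced partition is distance-constant since every cross-component distance equals $2$; and it is in-transitive because for any three distinct parts $A,B,C$ one has $d_G(A,C)=2\ne 4=d_G(A,B)+d_G(B,C)$. Alternatively one checks directly that no element of $S$ lies on a geodesic between two others: if $x,y\in S$ with $d_G(x,y)=2$, any vertex of $I_G(x,y)\setminus\{x,y\}$ is adjacent to both $x$ and $y$, which among the $p_i$ would force three consecutive indices $p_{m-1},p_m,p_{m+1}$ to all lie in $S$ --- impossible under the ``skip one'' rule.

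Finally I would count $S$. The number of $i\in\{1,\ldots,\Delta\}$ with $i\not\equiv0\pmod3$ is $\Delta-\lfloor\Delta/3\rfloor$, and a short check over the residues $\Delta\equiv0,1,2\pmod3$ shows $\Delta-\lfloor\Delta/3\rfloor=\lfloor 2(\Delta+1)/3\rfloor$; hence $gp(G)\ge|S|=\lfloor 2(\Delta+1)/3\rfloor$ (the case $\Delta=2$, where $G\cong K_3$, being immediate since $gp(G)\ge3$). The one genuinely delicate point is the choice of construction rather than any calculation: a maximum independent set along the path would yield only $\lceil\Delta/2\rceil$ vertices, so the crux is to use Lemma~\ref{GP-C} and pack \emph{adjacent} pairs as $K_2$-components, which pushes the count up to about $2\Delta/3$ and matches the stated bound exactly.
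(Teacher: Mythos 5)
Your proposal is correct and takes essentially the same route as the paper: the paper's proof also picks a maximum-degree vertex, invokes Lemma~\ref{C-M} to obtain the induced fan on its closed neighborhood, selects two out of every three consecutive neighbors along the fan path, and counts $\lfloor 2(\Delta(G)+1)/3\rfloor$ vertices. Your verification that the set is in general position (via Lemma~\ref{GP-C}, or the direct observation that a common neighbor inside the set would force three consecutive path indices) is in fact tidier than the paper's somewhat garbled distance argument, but the construction and the counting are identical.
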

\begin{proof}
Assume, without loss of generality, that  $d_G(u)=\Delta(G)$ with $u\in V(G)$.
 It is easy to see that the result holds if  $\Delta(G)\leq 4$. So we may let $\Delta(G)\geq 5$ in the following.
Let $N_G(u)=\{v_1,v_2,\ldots,v_{k-1}\}$ with $k-1=\Delta(G)$.
By Lemma \ref{C-M}, we know that $N_G[u]$ forms a maximal fan $G_u$ with the central vertex $u$ in $G$.

Assume that $G_u=\{u\}\oplus P_{k-1}$ and $V(P_{k-1})=\{v_1,v_2,\ldots,v_{k-1}\}$ with natural adjacencies.
Set $S_1=\{v_{3i-2},v_{3i-1}:i\in[j]\}$ if $k\in\{3j,3j+1\}$ and $S_2=\{v_{3i-2},v_{3i-1}:i\in[j-1]\}\cup \{v_{k-1}\}$ if $k=3j+2$.
Then $|S_1|=2j$ and $|S_2|=2j+1$ since $k\geq 6$.
If $k=3j$ or $k=3j+1$, we have $d_G(w_1,w_2)\in\{1,2\}$ for any two vertices $\{w_1,w_2\}\subseteq S_1$.
Furthermore, if $d_G(w_1,w_2)=1$, then $w_1$ and $w_2$ are two adjacent vertices on $P_{k-1}$.
While $d_G(w_1,w_2)=2$, it means that $d_{P_{k-1}}(w_1,w_2)\geq 2$.
Note that $gp(G)\geq 3$ for any maximal outerplane graph $G$.
But if there exists one vertex $z\in S_1\setminus\{w_1,w_2\}$ such that $d_G(w_1,w_2)=d_G(w_1,z)+d_G(z,w_2)$, then $d_G(w_1,w_2)\geq 3$. It is a contradiction.
Therefore, $S_1$ is a general position set of $G$ and $gp(G)\geq 2j=\frac{2k}{3}$.
Analogously, $S_2$ is also a general position set of $G$, we have $gp(G)\geq 2j+1=\lfloor\frac{2k}{3}\rfloor$, as desired.
\end{proof}

\begin{lemma}\label{N-x}
Let $S$ be a general position set of maximal outerplane graph $G$ with $|S|\geq 3$. If $x\in S$, then $|N_G(x)\cap S|\leq 2$.
\end{lemma}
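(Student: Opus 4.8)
The plan is to argue by contradiction. Suppose some $x\in S$ has at least three neighbours lying in $S$, say $a,b,c\in N_G(x)\cap S$. The heart of the argument is to first show that any two of these three vertices must be adjacent in $G$. Pick two of them, say $a$ and $b$; since both are adjacent to $x$, the walk $a\,x\,b$ has length $2$, so $d_G(a,b)\leq 2$. If $a$ and $b$ were non-adjacent, then $d_G(a,b)=2$ and $a\,x\,b$ would be an $a,b$-geodesic, whence $x\in I_G(a,b)$. But then the triple $\{a,x,b\}\subseteq S$ would have $x$ lying on a geodesic between the other two vertices, contradicting the assumption that every triple of the general position set $S$ is non-geodesic. (Equivalently, one may invoke Lemma \ref{GP-C}: the neighbours of $x$ that lie in $S$ belong to the same component of $G[S]$ as $x$, and that component is a complete subgraph, so $x,a,b,c$ are automatically pairwise adjacent.)

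Applying this to each of the pairs $\{a,b\}$, $\{a,c\}$, $\{b,c\}$ shows that $a$, $b$ and $c$ are pairwise adjacent, and each is adjacent to $x$. Consequently the four distinct vertices $x,a,b,c$ induce a copy of $K_4$ in $G$. This $K_4$ is in particular a minor of $G$, contradicting the fact recalled in Section \ref{pre} that every MOP is $K_4$-minor-free. Therefore no vertex of $S$ can have three neighbours inside $S$, i.e. $|N_G(x)\cap S|\leq 2$, as claimed.

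The proof is short, and the only step that requires genuine care is the first one: converting the general position hypothesis into pairwise adjacency of the $S$-neighbours of $x$. The key distance observation making this work is that two common neighbours of $x$ are at distance at most $2$, so non-adjacency forces $x$ onto an $a,b$-geodesic. Once this is in place, the outerplanarity constraint ($K_4$-minor-freeness) finishes the argument immediately, and I anticipate no further obstacle.
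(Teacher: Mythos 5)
Your proposal is correct and rests on exactly the same two ingredients as the paper's proof --- the observation that two common neighbours of $x$ in $S$ at distance $2$ would place $x$ on a geodesic between them, and the $K_4$-minor-freeness of maximal outerplanar graphs --- merely run in the opposite order: the paper first uses $K_4$-minor-freeness to extract a pair of $S$-neighbours at distance $2$ and then contradicts the general position property, whereas you use general position to force pairwise adjacency and then contradict $K_4$-minor-freeness. This is essentially the same approach, and your argument (including the alternative appeal to Lemma~\ref{GP-C}) is sound as written.
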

\begin{proof}
 Note that $d_G(x)\geq 2$ for any $x\in V(G)$.  So we may suppose $d_G(x)\geq 3$, as otherwise the result holds obviously.
Assume further, without loss of generality, that $|N_G(x)\cap S|\geq 3$ and $N_G(x)\cap S=\{v_1,v_2,\ldots, v_k\}$ with $k\geq 3$.
There must be two vertices, say $v_i$ and $v_j$, such that $d_G(v_i,v_j)=2$ with $\{i,j\}\subseteq [k]$, otherwise the induced subgraph on $N_G(x)$ has a $K_4$ minor.
It is impossible. Then $d_G(v_i,v_j)=2=d_G(v_i,x)+d_G(x,v_j)$, which implies that $x\in I_G(v_i,v_j)$ in $G$. It contradicts with $\{v_i,v_j,x\}\subseteq S$.
 Thus, $|N_G(x)\cap S|\leq 2$ for any vertex $x\in V(G)$.
\end{proof}

 By Lemma \ref{N-x},  it is also immediate that the following holds.

\begin{corollary}\label{N-G-U}
Let $S$ be a general position set of any maximal outerplane graph $G$. For any vertex $x\in V(G)$, if $|N_G(x)\cap S|\geq 3$, then $x\not\in S$.
\end{corollary}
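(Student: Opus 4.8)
The plan is to obtain this statement as an immediate consequence of Lemma \ref{N-x} by contraposition, so the main task is simply to verify that the hypotheses of that lemma are met. First I would record the elementary observation that $N_G(x)\cap S$ is a subset of $S$, so the assumption $|N_G(x)\cap S|\geq 3$ forces $|S|\geq 3$. This is exactly the cardinality condition required to invoke Lemma \ref{N-x}, and noting it explicitly is what makes the corollary follow without any extra hypothesis on $|S|$.

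Next I would argue by contradiction. Suppose, for the sake of contradiction, that $x\in S$. Since $S$ is a general position set of the maximal outerplane graph $G$ and we have just established $|S|\geq 3$, Lemma \ref{N-x} applies directly to the vertex $x\in S$ and yields $|N_G(x)\cap S|\leq 2$. This contradicts the standing assumption $|N_G(x)\cap S|\geq 3$. Therefore $x\notin S$, which is precisely the assertion of the corollary.

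There is essentially no genuine obstacle here, as the corollary is the logical contrapositive of Lemma \ref{N-x}: the latter says membership $x\in S$ caps the number of neighbors of $x$ inside $S$ at two, and the corollary reads this in the other direction. The only point requiring a moment of care is confirming that the lemma's side condition $|S|\geq 3$ is automatically satisfied under the corollary's hypothesis, which I would dispatch with the subset observation above; once that is noted, the implication is immediate.
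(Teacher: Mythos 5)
Your proof is correct and matches the paper's treatment: the paper derives the corollary as an immediate consequence of Lemma~\ref{N-x} (its contrapositive), exactly as you do. Your explicit check that $N_G(x)\cap S\subseteq S$ guarantees the side condition $|S|\geq 3$ is a small but welcome addition the paper leaves implicit.
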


Next, we will characterize the induced subgraphs on the general position set for a maximal outerplane graph $G$.

\begin{lemma}\label{GP-S}
Let $S$ be a general position set of any maximal outerplane graph $G$.
If $|S|\geq 4$, then $G[S]$ contains no triangle.
\end{lemma}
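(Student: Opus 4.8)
The plan is to argue by contradiction. Suppose $G[S]$ contains a triangle on vertices $a,b,c$. Since $|S|\ge 4$, fix a fourth vertex $d\in S\setminus\{a,b,c\}$. All four vertices lie on the Hamiltonian cycle $H$ of the MOP, and $a,b,c$ split $H$ into three arcs; $d$ lies in the interior of exactly one of them, which is bounded by two of $a,b,c$ and avoids the third. After relabelling I may assume this arc joins $a$ to $b$ and avoids $c$. Then \emph{both} arcs of $H$ between $a$ and $b$ contain a vertex (namely $d$ on one, $c$ on the other), so $a$ and $b$ are non-adjacent on $H$; since $ab\in E(G)$ is a side of the triangle, this edge $ab$ is a chord of $G$.

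The key structural input is that a chord of an MOP is a $2$-cut. Drawing $H$ as the boundary of a disk, the chord $ab$ splits the disk into two sub-disks, one bounded by $ab$ together with the arc through $d$ and the other by $ab$ together with the arc through $c$. Planarity forbids any edge of $G$ from crossing $ab$, so every path joining a vertex on one open arc to a vertex on the other must pass through $a$ or $b$. Consequently $\{a,b\}$ separates $d$ from $c$ in $G$, and in particular every $d,c$-geodesic runs through $a$ or through $b$.

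I then close the argument via the definition of a general position set. If some $d,c$-geodesic passes through $a$, then $a$ lies strictly between $c$ and $d$, that is $d_G(c,d)=d_G(c,a)+d_G(a,d)$, so $a\in I_G(c,d)$; but $\{a,c,d\}\subseteq S$, contradicting that $S$ is a general position set. The case through $b$ is symmetric, giving $b\in I_G(c,d)$ with $\{b,c,d\}\subseteq S$. Either way we reach a contradiction, so $G[S]$ contains no triangle. (Alternatively one could feed the cut into the distance-constant partition of Lemma \ref{GP-C}, which forces $d_G(a,d)=d_G(b,d)=d_G(c,d)$, and contradict it with $d_G(c,d)=1+\min\{d_G(a,d),d_G(b,d)\}$ obtained from the $2$-cut; but the direct geodesic argument above is shorter and needs only the definition.)

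The step requiring the most care is the separation claim: making precise, from the fixed outerplanar embedding, that the chord $ab$ genuinely acts as a $2$-cut isolating the two arcs, and verifying the relabelling so that the arc containing $d$ is bounded by two triangle vertices with the third strictly on the far side and its bounding edge is a chord rather than a cycle edge. Once this is in hand, the contradiction with the general position property is immediate.
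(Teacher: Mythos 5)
Your proof is correct, and it takes a genuinely different route from the paper's. The paper argues via the characterization of general position sets in Lemma \ref{GP-C}: the triangle $\{x,y,z\}$ lies in one clique-component of $G[S]$, so distance-constancy of the partition forces $d_G(w,x)=d_G(w,y)=d_G(w,z)$ for any fourth vertex $w\in S$, and the contradiction is then extracted by a case analysis on the Hamiltonian distances $d_H(x,y)$, $d_H(y,z)$, $d_H(x,z)$ and on which segment of $H$ contains $w$ (with the case of all pairwise Hamiltonian distances at least $2$ dispatched only with ``analogously''). You instead work directly from the definition: after relabelling, the arc of $H$ containing $d$ is bounded by $a$ and $b$ with $c$ on the opposite arc, so the triangle edge $ab$ is a chord of $H$, and the standard Jordan-curve fact that a chord of an outerplane graph is a $2$-cut makes every $c,d$-geodesic pass through $a$ or $b$, putting $a$ or $b$ in $I_G(c,d)$ with $\{a,b,c,d\}\subseteq S$ --- a contradiction with no cases at all. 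Your argument is more elementary (it never needs Lemma \ref{GP-C}) and more uniform; its one genuine obligation is the separation claim, which you correctly identify and which does hold since the fixed embedding has all vertices on the outer face, so no edge can cross the chord $ab$. One small remark on your parenthetical alternative: to apply the distance-constant partition there you should note that $d$ cannot lie in the same component of $G[S]$ as the triangle, since that component is complete and $G$ is $K_4$-minor-free --- a point the paper itself glosses over; your primary chord-cut argument sidesteps this entirely, which is a further point in its favor.
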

\begin{proof}
Let $H$  be a Hamiltonian cycle of a maximal outerplane graph $G$ and $S_{uv}$ be an $u$,$v$-segment on $H$ for any $u$, $v\in V(H)$.
Assume that there exists one triangle $K_3$ in $G[S]$ and $V(K_3)=\{x,y,z\}$ with clockwise order on $H$.

Suppose first that $d_H(x,y)=d_H(y,z)=1$. Then $d_H(x,z)=2$.
As $\{x,y,z\}\subseteq V(K_3)$, our assumption implies that $d_G(y)=2$ and $d_G(x,z)=1$.
Since $|S|\geq 4$, there must be another vertex $w\in S\setminus\{x,y,z\}$.
Applying Lemma \ref{GP-C}, we have $d_G(w,x)=d_G(w,y)=d_G(w,z)$.
It follows that either $d_G(w,y)=d_G(w,x)+1$ or $d_G(w,y)=d_G(w,z)+1$, that is, either $x\in I_G(w,y)$ or $z\in I_G(w,y)$, which are impossible.

Now assume, without loss of generality, that $d_H(x,y)=1$ and $2\leq d_H(x,z)\leq \lfloor\frac{n}{2}\rfloor$.
There also must be another $w\in S\setminus\{x,y,z\}$ since $|S|\geq 4$.
It follows from Lemma \ref{GP-C} that $d_G(w,x)=d_G(w,y)=d_G(w,z)$.
If $w\in V(S_{yz})$, it is obvious that either $d_G(w,x)=d_G(w,y)+1$ or $d_G(w,x)=d_G(w,z)+1$, that is, either $y\in I_G(w,x)$ or $z\in I_G(w,x)$. It is impossible.
Analogously, we can get the same contradiction if $w\in V(S_{zx})$ or $\min\{d_H(x,y),d_H(x,z),d_H(y,z)\}\geq 2$. This completes the proof.
\end{proof}

\section{Maximal outerplane graphs}
\label{S:maxi}

In this section, we first give the formula of the gp-number for a fan.
 Then we will determine the upper bound on the gp-number of a maximal outerplane graph $G$ and characterize some corresponding extremal graphs when the bound is achieved.

For convenience, $G$ denotes a maximal outerplane graph in this section unless otherwise specified.

\begin{observation}\label{NV}
Let $H$ be a Hamiltonian cycle of $G$ and
 $S_{uv}$ be an $u,v$-segment on $H$. If $d_G(u,v)=1$, then $N_G(u^{\prime})\subseteq V(S_{uv})$,
 for $u^{\prime}\in V(S_{uv})\setminus\{u,v\}$.
\end{observation}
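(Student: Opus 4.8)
The plan is to argue directly from the plane embedding of $G$, using the defining feature of outerplanarity: all vertices lie on the cycle $H$ that bounds the outer face, and no two edges of $G$ cross except at a common endpoint. First I would dispose of the trivial configuration. If $u$ and $v$ are consecutive on $H$, then one of the two segments is the single edge $uv$, which has no interior vertex and makes the claim vacuous, while the complementary segment is a Hamiltonian path through every vertex, so that $V(S_{uv})=V(G)$ and the inclusion is automatic. Hence the substantive case is the one in which $uv$ is a chord of $H$.

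Next I would exploit the separating property of this chord. Since every vertex of $G$ lies on $H$, we have $V(G)=V(S_{uv})\cup V(S_{vu})$ with $V(S_{uv})\cap V(S_{vu})=\{u,v\}$. Fix an interior vertex $u'\in V(S_{uv})\setminus\{u,v\}$ and a neighbor $w\in N_G(u')$, and suppose toward a contradiction that $w\notin V(S_{uv})$; then $w$ lies in the interior of the complementary segment, i.e.\ $w\in V(S_{vu})\setminus\{u,v\}$. The two $H$-neighbors of the interior vertex $u'$ both belong to $V(S_{uv})$, so $u'w$ is not an edge of $H$ and must therefore be a chord.

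The key step is then a crossing argument. Reading the vertices in their clockwise cyclic order along $H$, the endpoints $u$ and $v$ of the chord $uv$ split the remaining vertices into the open arc carrying the interior of $S_{uv}$ (which contains $u'$) and the open arc carrying the interior of $S_{vu}$ (which contains $w$). Thus the endpoint pairs $\{u,v\}$ and $\{u',w\}$ interleave on $H$, so in any planar drawing with all vertices on the outer face the chords $uv$ and $u'w$ must cross, contradicting the plane embedding of $G$. Equivalently, the four vertices $u,v,u',w$ together with two crossing chords yield a $K_4$ minor, contradicting the $K_4$-minor-freeness of any MOP recorded in the preliminaries. Either way, no such $w$ exists, so $N_G(u')\subseteq V(S_{uv})$, which is the assertion.

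The main obstacle is not conceptual but one of rigor: making the informal phrase ``crossing chords'' precise. I would phrase it through the cyclic order of vertices on $H$ and the standard fact that two chords of a cycle cross precisely when their endpoint pairs interleave, then invoke planarity (or $K_4$-minor-freeness) to close the argument. The only subtlety to check carefully is that $u'w$ is genuinely a chord rather than an edge of $H$, which is exactly why I first observe that both $H$-neighbors of the interior vertex $u'$ lie in $V(S_{uv})$.
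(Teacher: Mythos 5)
Your proof is correct and complete. For comparison: the paper offers no proof at all here --- the statement appears as an \emph{Observation}, treated as immediate from the outerplanar embedding --- so your argument supplies exactly the formalization the authors leave implicit. Your reduction is clean: the consecutive case is handled correctly (one segment is edgeless in its interior, the other contains all of $V(G)$), the check that $u'w$ cannot be an edge of $H$ (both $H$-neighbours of the interior vertex $u'$ lie in $V(S_{uv})$) is the right subtlety to isolate, and the distinctness of $u,v,u',w$ follows since the interiors of the two segments are disjoint. Of your two closing routes, the $K_4$-minor version is the tidier one to write down: contracting the four arcs of $H$ determined by the cyclic order $u,u',v,w$ yields a $C_4$ on these vertices, and the chords $uv$ and $u'w$ are its two diagonals, giving $K_4$ and contradicting the $K_4$-minor-freeness of MOPs recorded in Section~\ref{pre}. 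This has the advantage of being embedding-free, whereas the crossing-chords argument requires the Jordan-curve fact about interleaved endpoint pairs together with the observation that in the outerplane embedding $H$ bounds the outer face and all chords lie in the closed disc it bounds --- precisely the rigor issue you flag yourself. Either route is valid; nothing is missing.
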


By symmetry, based on the above Observation \ref{NV}, we also have $N_G(v^{\prime})\subseteq V(S_{vu})$, where $v^{\prime}\in V(S_{vu})\setminus\{u,v\}$.

Next, we will prove that there is one common neighbor vertex for two adjacent vertices on $H$ as follows.

\begin{lemma}\label{N-T}
Let $H$ be a Hamiltonian cycle of $G$.
For two adjacent vertices $u$ and $v$ of $H$, then $u,v\in N_G(h)$ for some $h\in V(H)\setminus\{u,v\}$
\end{lemma}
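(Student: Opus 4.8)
The plan is to exploit the fan structure at one of the two vertices, as provided by Lemma \ref{C-M}, rather than arguing directly about faces. Fix the edge $uv$ of $H$ and consider the vertex $u$. By Lemma \ref{C-M}, $N_G[u]$ forms a maximal fan $G_u$ with central vertex $u$, so the neighbors of $u$ can be listed as $v_1,v_2,\ldots,v_d$ (with $d=d_G(u)$) in such a way that they induce the path $P_d$ with natural adjacencies $v_iv_{i+1}\in E(G)$. Since $G$ is $2$-connected (it has a Hamiltonian cycle), we have $d_G(u)\geq 2$, so this path has at least two vertices.

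Because $v$ is adjacent to $u$ on $H$, we have $v\in N_G(u)$, so $v=v_i$ for some $i\in[d]$. The key step is then immediate: as the path on $v_1,\ldots,v_d$ has at least two vertices, the vertex $v_i$ has at least one neighbor among $\{v_{i-1},v_{i+1}\}$ on that path, say $h$. By construction $h\in N_G(u)$ and $h\in N_G(v_i)=N_G(v)$, hence $u,v\in N_G(h)$. Finally, $h$ is distinct from both $u$ and $v$ (it is a neighbor of $u$ different from the chosen $v_i$), and since every MOP has a unique Hamiltonian cycle through all its vertices, $h\in V(H)\setminus\{u,v\}$, which is exactly what is required.

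I expect no serious obstacle here; the only point needing a line of care is confirming that the chosen common neighbor $h$ really differs from $v$, which is handled by picking $h$ as an adjacent vertex of $v_i$ on the fan path (these are distinct vertices of the path). As an alternative route, I could argue geometrically: since $H$ bounds the outer face and $G$ is a triangulated disc (every inner face is a $K_3$), the boundary edge $uv$ lies on exactly one inner triangle, whose third vertex $h$ is adjacent to both $u$ and $v$; this yields the same conclusion. I would present the fan argument as the primary proof because it relies only on the previously stated Lemma \ref{C-M} and the $2$-connectivity of $G$, keeping the reasoning self-contained.
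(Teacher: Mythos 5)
Your proof is correct and takes essentially the same route as the paper: both rest on Lemma \ref{C-M}, extracting a common neighbor of $u$ and $v$ from the maximal fan centered at $u$. The only difference is cosmetic---the paper first treats the case $d_G(u)=2$ separately (there the third vertex of the unique triangle on the edge $uv$ serves as $h$), whereas your uniform fan argument covers all degrees at once, since the fan path $v_1,\ldots,v_d$ has at least two vertices whenever $d_G(u)\geq 2$.
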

\begin{proof}
Let $V(H)=\{h_1,h_2,\ldots,h_n\}$ with clockwise order on $H$. The proof is simple if $n\leq 6$.
So we may assume that $n\geq 7$ in what follows.

Without loss of generality, let $u=h_1$ and $v=h_2$.
If $d_G(h_1)=2$ (the case $d_G(h_2)=2$ is symmetry), then we have $d_G(h_2,h_n)=1$. Obviously, $h_1,h_2\in N_G(h_n)$.
 Next, we may consider the case $\min\{d_G(h_1),d_G(h_2)\}\geq 3$.
Using Lemma \ref{C-M}, $N_G[h_1]$ forms a fan with the central vertex $h_1$ in $G$.
Since $h_2\in N_G(h_1)$ and $d_H(h_1,h_2)=1$, $\{h_1,h_2\}\subseteq N_G(h)$ for some $h\in N_G(h_1)\setminus\{h_2\}$, completing this proof.
\end{proof}

Next, we will give the formula of the gp-number of fan which plays a crucial role in the proof of our main result.

\begin{theorem}\label{F-G}
Let $G$ be a fan graph of order $n\geq 5$. Then $gp(G)=\lfloor\frac{2n}{3}\rfloor$.
\end{theorem}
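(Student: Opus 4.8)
The plan is to exploit the very simple metric structure of the fan $G=F_{n-1}=\{v\}\oplus P_{n-1}$, whose central vertex $v$ is adjacent to every vertex of the path $P_{n-1}=p_1p_2\cdots p_{n-1}$. First I would record the distances: $d_G(v,p_i)=1$ for all $i$, $d_G(p_i,p_j)=1$ when $|i-j|=1$, and $d_G(p_i,p_j)=2$ otherwise, a shortest path passing through $v$. In particular $\diam(G)=2$, so every geodesic has length at most $2$ and a vertex lies strictly inside a geodesic only if it is a common neighbour of the two endpoints. For $p_i,p_j$ with $|i-j|\ge 2$ the common neighbours are exactly $v$ together with the middle vertex $p_{i+1}$ in the single case $j=i+2$. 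From this I would extract the decisive fact: among path vertices, a triple of three consecutive vertices $\{p_i,p_{i+1},p_{i+2}\}$ is the only geodesic triple, since $p_{i+1}\in I_G(p_i,p_{i+2})$, whereas any triple of path vertices that is not of this form is non-geodesic.

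Next I would dispose of the central vertex. If $v\in S$ for a general position set $S$, and $p_a,p_b\in S$ with $|a-b|\ge 2$, then $v\in I_G(p_a,p_b)$, contradicting that $S$ is a general position set; hence any two path vertices of $S$ are adjacent on $P_{n-1}$, so $S$ contains at most two of them and $|S|\le 3$. (Alternatively this follows from Lemma \ref{GP-S}, since $v$ together with two adjacent path vertices would span a triangle in $G[S]$.) Because $\lfloor 2n/3\rfloor\ge 3$ for $n\ge 5$, with strict inequality for $n\ge 6$, every gp-set of size exceeding $3$ avoids $v$, and it therefore suffices to maximise $|S|$ over subsets of the path.

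It then remains a purely combinatorial problem: by the characterisation above, $S\subseteq V(P_{n-1})$ is a general position set if and only if it contains no three consecutive vertices $p_i,p_{i+1},p_{i+2}$. For the lower bound I would exhibit the periodic pattern keeping two out of every three vertices, e.g. $S=\{p_i: i\not\equiv 0 \!\!\pmod 3\}$, which has no three consecutive members and size $(n-1)-\lfloor (n-1)/3\rfloor$. For the upper bound I would note that any three consecutive positions contain at most two chosen vertices, so summing this bound over a partition of $p_1,\dots,p_{n-1}$ into consecutive blocks of length three (with a possibly shorter final block) again yields $(n-1)-\lfloor (n-1)/3\rfloor$. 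A short case check on the residue of $n$ modulo $3$ shows this value equals $\lfloor 2n/3\rfloor$, and combining the two cases gives $gp(G)=\lfloor 2n/3\rfloor$.

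The main obstacle, and the step deserving the most care, is the geodesic analysis that isolates exactly which triples are forbidden: one must verify that for $|i-j|=2$ the middle path vertex (not merely $v$) lies on a geodesic, so that three consecutive path vertices are genuinely excluded, while confirming that no other triple of path vertices, and no triple containing $v$ beyond the bounded cases, is geodesic. Everything afterwards is the standard ``no three consecutive along a path'' extremal count, whose only delicate point is matching $(n-1)-\lfloor (n-1)/3\rfloor$ with the claimed $\lfloor 2n/3\rfloor$ across the three residue classes of $n$ modulo $3$.
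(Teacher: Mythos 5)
Your proof is correct and follows essentially the same route as the paper: both exclude the central vertex $v$, bound any general position set by at most two vertices out of every three consecutive path vertices, and attain the bound with the periodic two-out-of-three pattern, leaving only the residue bookkeeping for $\lfloor\frac{2n}{3}\rfloor$. The only difference is cosmetic --- where the paper invokes Lemma \ref{D-M} for the lower bound and Corollary \ref{N-G-U} to discard $v$, you derive both directly from the diameter-$2$ metric via your exact classification of geodesic triples (three consecutive path vertices, or $v$ with two path vertices at distance at least $2$), which makes the argument self-contained but not substantively different.
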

\begin{proof}
Let $G\cong \{v\}\oplus P_{n-1}$ with $V(P_{n-1})=\{1,2,\ldots,n-1\}$.
Assume further that $R$ be a gp-set of $G$. Suppose $n\geq 8$, as otherwise the proof is simple.
Note that $G$ is a fan and $\Delta(G)\geq 7$. It follows from  Lemma \ref{D-M} that $|R|\geq 5$.
Applying Corollary \ref{N-G-U}, it is obvious that $v\not\in R$.
For any positive integer $k\geq 2$,
based on the value of $n$, we consider the following cases.

Suppose first that $n=3k$.
Since $G$ is a fan and $\Delta(G)=n-1$, by Lemma \ref{D-M}, it is clear to see that $gp(G)\geq 2k$.
Let $V_3(i)$ be the set of consecutive three vertices of $P_{n-1}$ and $V_3(i)=\{3i-2, 3i-1, 3i\}$, where $\in[k-1]$.
Then $V_3(i)\cap V_3(i+1)=\emptyset$.
 Clearly, we have $|V_3(i)\cap R|\leq 2$ for any $i\in[k-1]$, otherwise the induced subgraph on $V_3(i)$ is a subpath $P_3$ in $G$ contradicting the fact that $V_3(i)$ is a general position set of $G$.
Then we have
$$\sum\limits_{i=1}^{k-1}|V_3(i)\cap R|\leq 2(k-1).$$
Since $\bigcup\limits_{i=1}^{k-1}V_3(i)=[n-3]=V(P_{n-1})\setminus\{n-2,n-1\}$,
$$|V(G)\cap R|\leq 2(k-1)+2=2k.$$
Hence $gp(G)=2k$ if $n=3k$.

Analogously, we can get
$$gp(G)=\left\{
\begin{array}{rcl}
2k, &{ n=3k+1}\\
2k+1, &{n=3k+2}.
\end{array}\right.
$$
In consequence, we can conclude that $gp(G)=\lfloor\frac{2n}{3}\rfloor$, completing this proof.
\end{proof}

We now construct two classes of connected graphs, which we will use in the main result of proofs.
For the sake of clarity, we give the definitions as follows.

\begin{definition}
For any $n\geq 6$, let $F_{n-2}=\{v\}\oplus P_{n-2}$ be a fan graph with $V(P_{n-2})=\{p_1,p_2,\ldots,p_{n-2}\}$.
Then the quasi-fan graph $F(i;n)$ is a connected graph obtained from $F_{n-2}$ and an isolated vertex $u$ by adding two edges $up_{i}$ and $up_{i+1}$, where $i\in [n-3]$.
\end{definition}

\begin{definition}
For any two positive integers $n\geq 6$ and $1\leq t\leq \lfloor\frac{n}{3}\rfloor-1$, let $F_{3t}=\{v\}\oplus P_{3t}$ and $F_{n-3t-1}=\{u\}\oplus P_{n-3t-1}$ be two fans with $V(P_{3t})=\{p_1,p_2,\ldots,p_{3t}\}$ and $V(P_{n-3t-1})=\{u_1,u_2,\ldots,u_{n-3t-1}\}$.
If $u=p_{3j-1}$, that is, gluing $u$ to $p_{3j-1}$, we denote this connected graph by $DF^{j}(t;n)$.
Moreover,  for $j\in [t]$,
 \begin{itemize}
\item[1.] $G_1(j)$ is obtained by adding new edge between $p_{3j-2}$ and $u_1$ in $DF^{j}(t;n)$;
\item[2.] $G_2(j)$ is obtained by adding new edge between $p_{3j}$ and $u_{n-3t-1}$ in $DF^{j}(t;n)$.
\end{itemize}
\end{definition}

\begin{figure}[htbp]
\centering
\begin{tikzpicture}[scale=1.5]

	\node[fill=black,circle,inner sep=2pt] at (0,0) {};
	\node[fill=black,circle,inner sep=2pt] at (1,0) {};
    \node[fill=black,circle,inner sep=2pt] at (2,0) {};
    \node[fill=black,circle,inner sep=2pt] at (-1,0) {};
    \node[fill=black,circle,inner sep=2pt] at (-2,0) {};
    \node[fill=black,circle,inner sep=2pt] at (-0.5,-1) {};
    \node[fill=black,circle,inner sep=2pt] at (0.5,-1) {};
    \node[fill=black,circle,inner sep=2pt] at (-0.5,1) {};

    \node [above=0.5mm ] at (0,0) {$u$};
    \node [below=0.5mm] at (-2,0) {$p_1$};
    \node [above=0.5mm] at (-1,0) {$p_{3j-2}$};
    \node [below=0.5mm] at (1,0) {$p_{3j}$};
    \node [below=0.5mm] at (2,0) {$p_{3t}$};
    \node [below=0.5mm] at (-0.5,-1) {$u_1$};
    \node [below=0.5mm] at (0.5,-1) {$u_{n-3t-1}$};
    \node [above=0.5mm] at (-0.5,1) {$v$};

	\draw [thick] (-0.5,1)--(-2,0);
    \draw [thick] (-0.5,1)--(-1,0);
    \draw [thick] (-0.5,1)--(0,0);
    \draw [thick] (-0.5,1)--(1,0);
    \draw [thick] (-0.5,1)--(2,0);
    \draw [thick][dashed] (-2,0)--(-1,0);
    \draw [thick] (-1,0)--(0,0);
    \draw [thick] (0,0)--(1,0);
    \draw [thick] [dashed](1,0)--(2,0);
    \draw [thick] (0,0)--(-0.5,-1);
    \draw [thick] (0,0)--(0.5,-1);
    \draw [thick] [dashed](-0.5,-1)--(0.5,-1);
    \draw [thick,blue] (-1,0)--(-0.5,-1);
\end{tikzpicture}
\hspace{10mm}
\begin{tikzpicture}[scale=1.5]

	\node[fill=black,circle,inner sep=2pt] at (0,0) {};
	\node[fill=black,circle,inner sep=2pt] at (1,0) {};
    \node[fill=black,circle,inner sep=2pt] at (2,0) {};
    \node[fill=black,circle,inner sep=2pt] at (-1,0) {};
    \node[fill=black,circle,inner sep=2pt] at (-2,0) {};
    \node[fill=black,circle,inner sep=2pt] at (-0.5,-1) {};
    \node[fill=black,circle,inner sep=2pt] at (0.5,-1) {};
    \node[fill=black,circle,inner sep=2pt] at (-0.5,1) {};

    \node [above=0.5mm ] at (0,0) {$u$};
    \node [below=0.5mm] at (-2,0) {$p_1$};
    \node [below=0.5mm] at (-1,0) {$p_{3j-2}$};
    \node [above=0.5mm] at (1,0) {$p_{3j}$};
    \node [below=0.5mm] at (2,0) {$p_{3t}$};
    \node [below=0.5mm] at (-0.5,-1) {$u_1$};
    \node [below=0.5mm] at (0.5,-1) {$u_{n-3t-1}$};
    \node [above=0.5mm] at (-0.5,1) {$v$};

	\draw [thick] (-0.5,1)--(-2,0);
    \draw [thick] (-0.5,1)--(-1,0);
    \draw [thick] (-0.5,1)--(0,0);
    \draw [thick] (-0.5,1)--(1,0);
    \draw [thick] (-0.5,1)--(2,0);
    \draw [thick][dashed] (-2,0)--(-1,0);
    \draw [thick] (-1,0)--(0,0);
    \draw [thick] (0,0)--(1,0);
    \draw [thick] [dashed](1,0)--(2,0);
    \draw [thick] (0,0)--(-0.5,-1);
    \draw [thick] (0,0)--(0.5,-1);
    \draw [thick] [dashed](-0.5,-1)--(0.5,-1);
    \draw [thick,blue] (1,0)--(0.5,-1);
\end{tikzpicture}

\caption{ Graphs $G_1(j)$ (left) and $G_2(j)$ (right). }\label{figure1}
\end{figure}
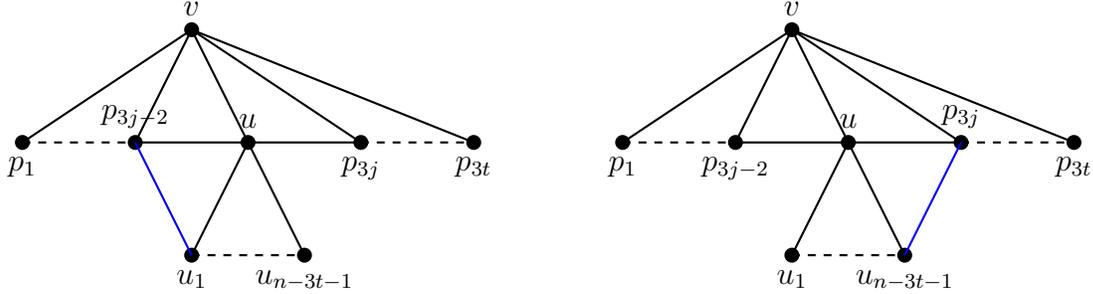

These graphs $G_1(j)$ and $G_2(j)$ are shown in Figure 1.

It is observed that $G_1(1)\cong F_{n-1}\cong G_2(1)$ if $t=1$.
For convenience, we may assume that $\mathcal{F}=\{F(i;n):1\leq i\leq n-2\}$ and $\mathcal{G^*}=\{G_k(j):1\leq k\leq 2\}$.

Lemma \ref{D-M} gives a general lower bound on gp-number for any maximal outerplane graph. Next, we will give the proof of our main result.

 \begin{theorem}\label{F^*-I-N}
Let $G$ be a maximal outerplane graph of order $n\geq 6$. Then $gp(G)\leq \lfloor\frac{2n}{3}\rfloor$.
Moreover, if $n\not\equiv1(mod~3)$, the equality holds if and only if $G\cong F_{n-1}$; otherwise, the equality holds if and only if $G\in\mathcal{F}\cup \mathcal{G}^*$.
\end{theorem}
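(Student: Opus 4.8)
The plan is to split the argument into the upper bound and the two directions of the extremal characterization. For the upper bound, let $S$ be a gp-set of $G$; since $n\ge 6$ gives $\lfloor 2n/3\rfloor\ge 4$, I may assume $|S|\ge 4$, as otherwise the inequality is immediate. The key structural reduction is to show that $G[S]$ is a disjoint union of copies of $K_1$ and $K_2$. Indeed, by Lemma \ref{GP-C} every component of $G[S]$ is a complete subgraph; if some component had a vertex of degree $2$ in $G[S]$ it would have to be a triangle, which is excluded for $|S|\ge 4$ by Lemma \ref{GP-S} (and is already constrained by Lemma \ref{N-x}). Hence $G[S]$ has maximum degree at most $1$. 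Fixing a Hamiltonian cycle $H$ of $G$, I then observe that no three consecutive vertices $h_i,h_{i+1},h_{i+2}$ of $H$ can all lie in $S$: the two $H$-edges at $h_{i+1}$ would give $h_{i+1}$ degree $2$ in $G[S]$, a contradiction. A short cyclic count finishes the bound: if $m=|V(G)\setminus S|$, the wasted vertices cut $H$ into $m$ arcs, each containing at most two vertices of $S$, so $|S|\le 2m=2(n-|S|)$, i.e.\ $3|S|\le 2n$ and $|S|\le\lfloor 2n/3\rfloor$.

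For the sufficiency (every listed graph attains the bound) I would handle the fan $F_{n-1}$ directly from Theorem \ref{F-G}. For $n\equiv 1\pmod 3$, writing $n=3k+1$ so that $\lfloor 2n/3\rfloor=2k$, I would exhibit an explicit set of size $2k$ in each quasi-fan $F(i;n)$ and in each member of $\mathcal{G}^*$, and verify it is in general position via the componentwise criterion of Lemma \ref{GP-C}. Concretely one reuses the $2$-of-$3$ pattern along the underlying path(s) of the defining fan(s) and checks that the inserted $2$-vertex $u$ (and, in the double-fan case, the gluing vertex) neither raises a $G[S]$-degree to $2$ nor creates a geodesic passing through a third part.

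The heart of the theorem is the necessity direction. Starting from an extremal gp-set $S$ with $|S|=\lfloor 2n/3\rfloor$, the cyclic count above is tight up to the \emph{slack} $2m-|S|$, which equals $0,1,2$ according as $n\equiv 0,2,1\pmod 3$; thus the arcs of $H$ determined by $V(G)\setminus S$ are forced into the near-periodic pattern $SSW$, with only the slack many arcs carrying fewer than two $S$-vertices. For each $K_2$-component $\{s,s'\}$ (a consecutive $H$-pair) Lemma \ref{N-T} supplies a common neighbour $h$, and $h\notin S$, since otherwise $\{h,s,s'\}$ would be a triangle in $G[S]$, contradicting Lemma \ref{GP-S}. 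The claim I must push through is that the distance-constant and in-transitive conditions of Lemma \ref{GP-C}, together with the $K_4$- and $K_{2,3}$-minor-freeness of the MOP, force these common neighbours to coincide in essentially a single vertex $v$; then $N_G[v]=V(G)$ and, by Lemma \ref{C-M}, $G$ is a fan. When $n\equiv 0$ or $2\pmod 3$ the slack ($0$ or $1$) is too small to support any deviation, so $G\cong F_{n-1}$; when $n\equiv 1\pmod 3$ the slack equals $2$, exactly enough to free one low-degree vertex $u$, and the analysis then splits according to whether $u$ is inserted into a single pair (giving $\mathcal{F}$) or realized through gluing two fans (giving $\mathcal{G}^*$).

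I expect the rigidity step in the necessity proof to be the main obstacle: converting the near-periodic $SSW$ pattern on $H$ plus the distance-constancy of Lemma \ref{GP-C} into the statement that one vertex dominates almost all of $S$. I would attack it by a local analysis of two adjacent arcs sharing a wasted vertex, using Observation \ref{NV} to confine the relevant neighbourhoods inside the corresponding segments and the forbidden minors to prevent the common neighbours of distinct pairs from being distinct; the $n\equiv 1\pmod 3$ case I would organize around the unique arc carrying the extra unit of slack, showing that the only configurations it permits are precisely those of $\mathcal{F}\cup\mathcal{G}^*$.
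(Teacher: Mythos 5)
Your upper-bound argument is correct and in fact streamlines the paper's: the paper proves the same ``no three consecutive vertices of $H$ in $R$'' statement (its Claim~1 inside the proof) directly from Lemma~\ref{GP-S} and the interval condition, and then runs a blockwise count over the sets $V_3(i)$ with separate, somewhat fiddly arguments for $n\equiv 1,2\pmod 3$, whereas your single arc count $|S|\le 2(n-|S|)$ handles all residues at once. The sufficiency direction also matches the paper: Theorem~\ref{F-G} for the fan, and the explicit $2$-of-$3$ pattern $\{p_{3\ell-2},p_{3\ell-1}:\ell\in[k]\}$ verified in the members of $\mathcal{F}$ and $\mathcal{G}^*$.

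The necessity direction, however, is where essentially all of the paper's work lies, and your proposal does not prove it; you explicitly flag the rigidity step as ``the main obstacle'' and only describe how you would attack it. Concretely, what is missing is (a) the paper's Claim~2: if four consecutive vertices of $H$ contain three vertices of $R$, then all four lie in $N_G[h]$ for a single vertex $h$, proved via Lemma~\ref{N-T}, Observation~\ref{NV} and maximal outerplanarity; (b) the propagation argument that welds these local statements together: minor-freeness is used only once, to show that two common neighbours of the \emph{same} pair coincide (a $K_4$ minor otherwise), while coincidence across \emph{different} pairs is obtained by repeatedly invoking distance-constancy from Lemma~\ref{GP-C} --- e.g.\ from $\{h_4,h_5\}\subseteq R$ and $d_G(h_2,h_4)=2$ one deduces $d_G(h_2,h_5)=2$, which forces the chord $h_3h_5$, and so on around the cycle until $N_G[h]=V(G)$; your hope that forbidden minors alone ``prevent the common neighbours of distinct pairs from being distinct'' is not how the argument closes; and (c) in the case $n\equiv 1\pmod 3$, the bookkeeping of the slack: the paper shows that the set $R^*$ of isolated vertices of $G[R]$ satisfies $|R^*|\in\{0,2\}$ (floor-function estimates plus a parity argument), locates the isolated vertices at positions $h_1,h_x$ with $x\equiv 0\pmod 3$, and only then extracts $\mathcal{F}$ versus $\mathcal{G}^*$, including endgame steps such as excluding $d_G(h_1,h_x)=1$ because it would place $h_1$ on an $h_{n-1},h_3$-geodesic, forcing $h_nh_2\in E(G)$ and $G\cong G_1(1)$. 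Note also a tacit assumption in your sketch: you declare every $K_2$-component of $G[S]$ to be an $H$-consecutive pair, but when $n\equiv 1\pmod 3$ the two units of slack could a priori be spent on a chord joining two $S$-vertices at $H$-distance at least $2$ (each an $H$-run of length one); ruling this configuration out is part of the $|R^*|$ analysis and does not follow from minor-freeness. Until (a)--(c) are carried out, the characterization --- the heart of the theorem --- remains unestablished.
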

\begin{proof}
Let $H$ be the Hamiltonian cycle of $G$ and $h_1$,$h_2$,\ldots,$h_n$ be its vertices in a cycle clockwise order.
Assume further that $R$ is a gp-set of $G$ and $V_{3}$ is the set of consecutive three vertices on $H$.
Recall that $gp(G)\geq 3$ for any maximal outerplane graph.
The upper bound holds obviously if $gp(G)\in\{3,4\}$ since $n\geq 6$.
So we may assume that $|R|\geq 5$.
In order to obtain the upper bound, the following claim is important for our proof.

\medskip\noindent
{\bf Claim 1.} $|V_3\cap R|\leq 2$ for any $V_3\subseteq V(H)$.\\
Suppose that $V_3\subseteq R$, and without loss of generality let $V_3=\{u,v,w\}$ with clockwise order on $H$.
Then $d_H(x,z)=2$ and $d_H(x,y)=d_H(y,z)=1$.
Note that $d_G(x,z)\leq d_H(x,z)$.
The induced subgraph on $V_3$ is a $K_3$ if $d_G(x,z)=1$, which contradicts Lemma \ref{GP-S}.
But if $d_G(x,z)=2$, we have $d_G(x,z)=d_G(x,y)+d_G(y,z)$. It means that $y\in I_G(x,z)$, which is impossible.
Thus, $|V_3\cap R|\leq 2$.
This completes the claim.

Let $V_3(i)=\{h_{3i-2},h_{3i-1},h_{3i}\}$ be the set of consecutive three vertices of $H$, where $i\in[k]$.
Assume first that $n=3k$. Note that $k\geq 2$ is a positive integer as $n\geq6$. It implies that $\bigcup_{i=1}^{k}V_3(i)=V(H)$.
According to Claim 1, it is clear to verify that $\sum_{i=1}^{k}|V_3(i)\cap R|\leq 2k=\lfloor\frac{2n}{3}\rfloor$.

Now, suppose that $n=3k+1$. Then $\bigcup_{i=1}^{k}V_3(i)=V(H)\setminus\{h_n\}$.
If $|R\cap V(H)|\geq 2k+1$, applying Claim 1, then it implies that $h_n\in R$ and $|V_3(i)\cap R|=2$ for any $i\in[k]$.
Applying Claim 1 again, we can get $|\{h_1,h_2\}\cap R|=1$ and $h_{n-1}\not\in R$.
Furthermore, if $h_1\in R$, then $h_3\in R$. Since $|V_3(i)\cap R|=2$, in view of Claim 1, we may assume that $V_3(i)\cap R=\{h_{3i-2},h_{3i}\}$ for any $i\in[k]$. Then we have $h_{n-1}\in R$ if $i=k$, which is a contradiction.
While $h_2\in R$, it implies that $h_3\in R$ and $V_3(i)\cap R=\{h_{3i-1},h_{3i}\}$ for any $i\in[k]$.
It leads to $h_{n-1}\in R$ if $i=k$, a contradiction again.
Thus, we have $|R|\leq 2k=\lfloor\frac{2n}{3}\rfloor$.
For the case $n=3k+2$, we can prove that $|R|\leq 2k+1=\lfloor\frac{2n}{3}\rfloor$ as similar to the above.
Consequently, we get $gp(G)\leq \lfloor\frac{2n}{3}\rfloor$.

Next, we will prove that the necessary and sufficient condition when the equality holds. Let $k\geq 2$ be a positive integer.
Assume first that $n=3k$ or $n=3k+2$. By Theorem \ref{F-G}, we have $gp(G)=\lfloor\frac{2n}{3}\rfloor$ if $G\cong F_{n-1}$.
Suppose now that $n=3k+1$. If $G\cong F(i;n)$, we may assume without loss of generality that $F_{n-1}=\{v\}\oplus P_{n-1}$ and $V(P_{n-1})=\{p_1,p_2,\ldots,p_{n-1}\}$.
Take $S_1=\{p_{3\ell-2},p_{3\ell-1}:\ell\in[k]\}$. It is easy to prove that $S_1$ of order $2k$ is a general position set of $G$.
By the above argument, we can get $gp(G)=2k=\lfloor\frac{2n}{3}\rfloor$.
 Similarly, we also have $gp(G)=2k$ if $G$ is isomorphic to $G_1(j)$ or $G_2(j)$.

So we prove the sufficiency only in what follows.
Let $G$ be a maximal outerplane graph, which satisfies $gp(G)=\lfloor\frac{2n}{3}\rfloor$.
Suppose $n\geq 12$, as otherwise the proof is simple. Note that $|R|\geq 8$.
Indeed, $|V_3(i)\cap R|\geq 1$ for any $i\in[k]$.
If there exists one $V_3(i)$ such that $|V_3(i)\cap R|=0$, by Claim 1,
we can get
$$|R|\leq 2\times(\lfloor\frac{n}{3}\rfloor-1)+1=2\times\lfloor\frac{n}{3}\rfloor-1<\lfloor\frac{2n}{3}\rfloor.$$
It contradicts our assumption. Combining this with Claim 1, we have $1\leq |V_3(i)\cap R|\leq 2$ for any $i\in[k]$.

Similar to $V_3(i)$,  $V_4$ is defined as the set of consecutive four vertices of $H$.
To prove our result, the following claim will be useful to the rest proof.

\medskip\noindent
{\bf Claim 2.} If $|V_4\cap R|=3$, then $V_4\subseteq N_G[h]$ for some $h\in V(h)$.\\
Let $V_4=\{u,v,w,z\}$ with  clockwise order on $H$.
By Claim 1, we may assume, without loss of generality, that $V_4\cap R=\{u,v,z\}$.
Since $u$, $v$ are two adjacent vertices of $H$, in view of Lemma \ref{N-T}, $\{u,v\}\subseteq N_G(h^{\prime})$ for some $h^{\prime}\in V(H)\setminus\{u,v\}$.
By Lemma \ref{GP-S}, we have $h^{\prime}\neq z$ since $|R|\geq 8$.
It is clear to see that $V_4\subseteq N_G[h^{\prime}]$ if $h^{\prime}=w$.
If $h^{\prime}\in V(H)\setminus V_4$, then $d_G(v,z)=2$. Otherwise $v\in I_G(u,z)$ contradicts $V_4\cap R=\{u,v,z\}$.
As $d_G(v,z)=2$ and $d_G(u,v)=1$, it implies that $d_G(u,z)=2$.
Then there must be a vertex $u^*\in N_G(u)\setminus\{v\}$ or $z^*\in N_G(z)\setminus\{w\}$ such that $d_G(u^*,z)=1$ or $d_G(u,z^*)=1$.
Without loss of generality, assume that there exists a vertex $u^*\in N_G(u)\setminus\{v\}$ such that $d_G(u^*,z)=1$.
By Observation \ref{NV}, $u^*=h^{\prime}$. Since $G$ is a maximal outerplane graph and $d_G(v,z)=2$, we have $d_G(h^{\prime},w)=1$.
Hence $V_4\subseteq N_G(h^{\prime})$.
Moreover, the result also holds if $d_G(u,z^*)=1$ for some $z^*\in N_G(z)\setminus\{w\}$.
This completes the claim.

As $gp(G)=\lfloor\frac{2n}{3}\rfloor$, there must be two vertices $r_1$, $r_2\in R $ such that $d_H(r_1,r_2)=1$.
Otherwise, $|R|\leq \lceil\frac{n}{2}\rceil< \lfloor\frac{2n}{3}\rfloor$ contradicts the fact that $gp(G)=\lfloor\frac{2n}{3}\rfloor$.
Based on the value of $n$, we divide into the following cases.

\medskip\noindent
{\bf Case 1.} $n=3k$.\\
In this case, $gp(G)=2k$ with $k\geq 3$. Since $gp(G)=2k$, by Claim 1, we have $|V_3\cap R|=2$ for any $V_3\subseteq V(H)$.
Otherwise, $gp(G)\leq\frac{2(n-3)}{3}+1=2k-1$, which is impossible.
Without loss of generality, let $h_1$, $h_2\in R $.
Applying Claim 1, $h_3$, $h_n\not\in R$ and $h_4\in R$.
Similar to the above, we can get $h_{3i-2}$, $h_{3i-1}\in R$ and $h_{3i}\not\in R$, for any $i\in [k]$.
Take $S=\{h_{3i-2},h_{3i-1}:i\in[k]\}$ and $\overline{S}=\{h_{3i}:i\in[k]\}$.
It implies that $|S|=2k$, thus $S$ is a gp-set of $G$.
By Lemmas \ref{GP-S} and \ref{N-T}, $h_1$, $h_2\in N_G(h)$ for some $h\in \overline{S}$.
Since $h_1$, $h_2$, $h_3$ and $h_4$ are four consecutive vertices on $H$, in view of Claim 2, $\{h_1,h_2,h_3,h_4\}\subseteq N_G[h^{\prime}]$ for some $h^{\prime}\in V(H)$.
Then $h=h^{\prime}$, otherwise the induced subgraph on $\{h_1,h_2,h,h^{\prime}\}$ has a $K_4$ minor. It is impossible.

Suppose that $h=h_3$.
It implies that $d_G(h_1,h_3)=1$, thus $h_2$ is a 2-vertex in $G$ and $d_G(h_2,h_4)=2$.
Since $d_G(h_4,h_2)=2$ and $\{h_4,h_5\}\subseteq R$, in view of Lemma \ref{GP-C}, $d_G(h_2,h_5)=2$.
By Lemma \ref{GP-S}, we obtain that $d_G(h_3,h_5)=1$, which means that $h_4$ also is a 2-vertex in $G$.
For similar reason as the above, it follows that $h_7\in N_G(h_3)$ and we also have $d_G(h_5,h_7)=2$ since Lemma \ref{GP-S}.
According to the maximal outerplanarity of $G$, it leads to $d_G(h_3,h_6)=1$.
Thus we have $\{h_6,h_7,h_8\}\subseteq N_G(h_3)$.
Analogously,
we can obtain that $h^*\in N_G(h_3)$ for any $h^*\in V(H)\setminus\{h_3\}$.
As a consequence, $G\cong F_{n-1}$.

Assume now that $h\in \overline{S}\setminus\{h_3\}$, and without loss of generality let $h=h_{3i}$ for some $i\in [k]\setminus\{1\}$.
Then it follows that $h_2$ is a 3-vertex in $G$ since $\{h_1,h_2,h_3,h_4\}\subseteq N_G(h)$.
As $h_2$, $h_3$, $h_4$ and $h_5$ are four consecutive vertices on $H$, by using Claim 2, we have $h_5\in N_G(h)$.
Similarly, we can get $h_{3j-2}$, $h_{3j-1}\in N_G(h)$ for any $j\in[k]$ and $h_{3j}\in N_G(h)$ for any $j\in[k]\setminus\{i\}$.
Therefore $G\cong F_{n-1}$.

\medskip\noindent
{\bf Case 2.} $n=3k+1$.\\
In this case, $gp(G)=2k$. Let $R^*$ be the set of isolated vertices in $G[R]$. Then $R^*\subseteq R$.
We will claim that $|R^*|\in\{0,2\}$.

Suppose on the contrary that $|R^*|\geq 3$, and without loss of generality let $h_1$, $h_s$, $h_t\in R^*$ with $3\leq s\leq t\leq \lfloor\frac{n}{2}\rfloor$.
It follows from Claim 1 that
$gp(G)\leq 2\times\big\lfloor\frac{(n-7)}{3}\big\rfloor+3=2k-1$
if $s=3$ and $t=5$, a contradiction.
While $s=3$ and $t\geq 6$,
by Claim 1, we have
\begin{equation*}
\begin{aligned}
gp(G)& \leq 2\times\big\lfloor\frac{(t-6)}{3}\big\rfloor+2+ 2\times\big\lfloor\frac{(n-t-2)}{3}\big\rfloor+3\\
     &=2\times\lfloor\frac{t}{3}\rfloor+ 2\times\big\lfloor{\frac{n-t-2}{3}}\big\rfloor+1\\
     &=2\times\lfloor\frac{t}{3}\rfloor+2\times\big\lfloor{\frac{-1-t}{3}}\big\rfloor+2k+1\\
     &= 2k-1.
\end{aligned}
\end{equation*}
Note that $\lfloor\frac{t}{3}\rfloor+\big\lfloor{\frac{-1-t}{3}}\big\rfloor=-1$.
It is a contradiction.
Using the similar arguments as the above, we can get the same contradiction if $4\leq s\leq t$.
Thus, we can obtain $|R^*|\leq 2$.

Next assume that $|R^*|=1$, and without loss of generality let $h_1\in R^*$.
It means that $\{h_2,h_n\}\cap R=\emptyset$. Let $V^*=V(H)\setminus\{h_1,h_2,h_n\}$.
Then $|V^*\cap R|=2k-1$ since $gp(G)=2k$.
Applying Claim 1 again, there must be another isolated vertex in $G[R]$ as $2k-1$ is odd, which leads to a contradiction.
As a consequence, $|R^*|\in\{0,2\}$, completing this claim.

Based on the order of $R^*$, we divide into following subcases to proof this case.

\medskip\noindent
{\bf Subcase 1.} $|R^*|=0$.\\
In this subcase, for any vertex $u\in V(H)$, if $u\in R$, then we have $|N_H(u)\cap R|=1$ since Claim 1.
So we may assume, without loss of generality, that $R=\{h_{3i-2},h_{3i-1}:i\in[k]\}$ with order $2k$.  Take $\overline{R}=V(H)\setminus R=\{h_{3i}:i\in[k]\}\cup\{h_n\}$.
Since $h_1$, $h_2$, $h_3$, $h_4$ are four consecutive vertices on $H$, by Lemma \ref{GP-S} and Claim 2, $\{h_1,h_2,h_3,h_4\}\subseteq N_G[\bar{r}]$ for some $\bar{r}\in \overline{R}$.

Suppose that $\bar{r}=h_n$ (the case $\bar{r}=h_{n-1}$ is similar).
Then $h_2$ is a 3-vertex of $G$ and $N_G(h_2)=\{h_1,h_3,h_n\}$.
Since $d_G(h_2,h_4)=2$ and $\{h_4,h_5\}\subseteq R$, applying Lemma \ref{GP-C}, we have $d_G(h_2,h_5)=2$, and thus $d_G(h_n,h_5)=1$.
Analogously, $d_G(h_n,h_7)=d_G(h_n,h_8)=1$ and $d_G(h_n,h_6)=1$ since $\{h_7,h_8\}\subseteq R$ and $d_G(h_5,h_7)=2$.
By analogy, we can get $d_G(h_n,h_j)=1$ for any $j\in[n-1]$.
Thus $G\cong F_{n-1}$.

Assume that $\bar{r}\in \overline{R}\setminus\{h_{n-1},h_n\}$ and $\bar{r}=h_{3i}$ with $i\in[k-1]$.
If $i=1$, then $\bar{r}=h_{3}$. It implies that $h_2$ has exactly two neighbors $h_1$ and $h_3$ in $G$ since $d_G(h_1,h_3)=1$.
Since $d_G(h_2,h_4)=2$ and $\{h_4,h_5\}\subseteq R$, it follows from Lemma \ref{GP-C} that $d_G(h_2,h_5)=2$, thus we have $d_G(h_3,h_5)=1$.
Then we get that $h_4$ also is a 2-vertex of $G$.
For the same reason as the above, we have $d_G(h_3,h_7)=d_G(h_3,h_8)=d_G(h_3,h_6)=1$.
Hence, analogously, we can obtain $d_G(h_3,h^{\prime})=1$ for any $h^{\prime}\in V(H)\setminus\{h_3,h_n,h_{n-1}\}$.
It is easy to see that $G\cong F_{n-1}$ if $\{h_n,h_{n-1}\}\subseteq N_G(h_3)$.
Otherwise, we have
$$G\cong\left\{
\begin{array}{rcl}
F(3;n), &h_n\in N_G(h_3), h_{n-1}\not\in N_G(h_3);\\[6pt]
F(2;n), &h_n\not\in N_G(h_3), h_{n-1}\in N_G(h_3).
\end{array}\right.
$$
While $i\in [k-1]\setminus\{1\}$, similar to the above, we can obtain $G\in \mathcal{F}$.

\medskip\noindent
{\bf Subcase 2.} $|R^*|=2$.\\
In this subcase, we may, without loss of generality, let $h_1$, $h_x\in R^*$ with $3\leq x\leq \lfloor\frac{n}{2}\rfloor+1$. Then $d_H(h_1,h_x)=x-1$.
The proof is simple if $n\leq 9$.
Assume that $n\geq 10$ in the following.

Assume that $x=3$, that is, $\{h_1,h_3\}\subseteq R^*$.
According to Claim 1, we may without loss of generality, let $R=\{h_{3i+2},h_{3i+3}:i\in [k-1]\setminus\{1\}\}\cup\{h_1,h_3\}$ since $|R|=2k$.
Take $\overline{R}=V(H)\setminus R=\{h_{3i+1}:i\in[k]\}\cup\{h_2\}$.
As $h_3$, $h_4$, $h_5$ and $h_6$ are four consecutive vertices of $H$, applying Claim 2 and Lemma \ref{GP-S}, then we have $\{h_3,h_4,h_5,h_6\}\subseteq N_G(\bar{r})$ for some $\bar{r}\in \overline{R}$.
If $\bar{r}=h_2$, then $h_3$ is a 2-vertex with two neighbor vertices $h_2$ and $h_4$ in $G$.
It also implies that $h_5$ is a 3-vertex having three neighbors $h_4$, $_6$ and $h_2$ in $G$.
By Lemmas \ref{GP-C} and \ref{GP-S}, we have $d_G(h_5,h_8)=2$ since $\{h_5,h_6,h_8\}\subseteq R$, which means that $h_8\in N_G(h_2)$.
Then we also have $h_7\in N_G(h_2)$ since $G$ is a maximal outerplane graph.
By similar, we can get $h_9\in N_G(h_2)$.
Repeating the similar discussion above, it is observed that $h^{\prime}\in N_G(h_2)$ for any $h^{\prime}\in V(H)\setminus\{h_2\}$.
Thus we have $G\cong F_{n-1}$.

Next assume that $\bar{r}\in \overline{R}\setminus\{h_2\}$, and without loss of generality let $\bar{r}=h_4$.
Then $d_G(h_4,h_6)=1$, and hence we get that $h_5$ is a 2-vertex of $G$.
Since $\{h_5,h_6,h_8\}\subseteq R$ and $d_G(h_6,h_8)=2$, applying Lemma \ref{GP-C}, we have $d_G(h_5,h_8)=2$ and hence $d_G(h_4,h_8)=1$.
In addition, it follows that $d_G(h_4,h_7)=1$ since $G$ is a maximal outerplane graph.
Similarly, $d_G(h_4,h_9)=1$.
Repeating the similar discussion above,
it is easy to see that $d_G(h_4,h^{\prime})=1$ for any $h^{\prime}\in V(H)\setminus\{h_2,h_4\}$.
Furthermore, we can get $G\cong F_{n-1}$ if $d_G(h_4,h_2)=1$, otherwise $G\cong F(1;n)$.
Analogously, we can obtain that $G\in \{F_{n-1}\}\cup \mathcal{F}$ if $\bar{r}\in \overline{R}\setminus\{h_2,h_4\}$.

Suppose that $5\leq x \leq \lfloor\frac{n}{2}\rfloor+1$.
By Claim 1, we have
\begin{equation*}
\begin{aligned}
|R|& \leq 2\times\big\lfloor\frac{(n-x-2)}{3}\big\rfloor+2+ 2\times\big\lfloor\frac{(x-4)}{3}\big\rfloor+2+2\\
     &=2\times\big\lfloor\frac{(3k-x-1)}{3}\big\rfloor+2\times\big\lfloor\frac{(x-1)}{3}\big\rfloor+4\\
     &=2\times(\big\lfloor\frac{(-x-1)}{3}\big\rfloor+\big\lfloor\frac{(x-1)}{3}\big\rfloor)+2k+4.
\end{aligned}
\end{equation*}
According to our assumption, $gp(G)=2k$ implies that $\big\lfloor\frac{(-x-1)}{3}\big\rfloor+\big\lfloor\frac{(x-1)}{3}\big\rfloor=-2$. It is easy to verify that  $x=3y$ for any $y\in [\lceil\frac{k}{2}\rceil]$. Note that $x\geq 5$. Thus we have $y\geq 2$.

Let $R_1=\{h_{3j},h_{3j+1}:j\in [y-1]\}$ and $R_2=\{h_{3i+2},h_{3i+3}:i\in [k-1]\setminus[y-1]\}$.
Since $|R^*|=2$, we may let $R=R_1\cup R_2\cup\{h_1,h_y\}$.
Set $\overline{R}_1=\{h_{3j-1}:j\in[y]\}$ and $\overline{R}_2=\{h_{3i+1}:i\in [k]\setminus[y-1]\}$. It implies that $V(H)\setminus R=\overline{R}_1\cup \overline{R}_2$.
As $\{h_{n-2},h_{n-1},h_n,h_1\}$ and $\{h_1,h_2,h_3,h_4\}$ are four consecutive vertices sets on $H$, by Claim 2,
$\{h_{n-2},h_{n-1},h_n,h_1\}\subseteq N_G[\bar{r}]$ and $\{h_1,h_2,h_3,h_4\}\subseteq N_G[\bar{r}_1]$ for some $\bar{r}$, $\bar{r}_1\in \overline{R}$.
Analogously to the previous discussion, we can get $G\cong F_{n-1}$ if $\bar{r}=\bar{r}_1$.

Suppose that $\bar{r}\neq\bar{r}_1$, then we claim that $\bar{r}\not\in \overline{R}_1$ in what follows.
If not, it is easy to see that $\bar{r}\neq h_2$. Otherwise, it contradicts $\bar{r}=\bar{r}_1=h_2$ or there are two intersection edges $h_2h_n$ and $h_1\bar{r}_1$ in $G$.
Assume that $\bar{r}=h_{x-1}$ with $x\geq 5$.  By Observation \ref{NV} and Claim 2,
$\{h_x,h_{x-2},h_{x-3}\}\subseteq N_G(h_{x-1})$ as $h_x$, $h_{x-1}$, $h_{x-2}$ and $h_{x-3}$ are four consecutive vertices on $H$.
Analogously, we can get $d_G(h_{x-1},h_\ell)=1$ for any $\ell\in [x-2]$. It leads to $\bar{r}=\bar{r}_1$, which is a contradiction.
Suppose that $\bar{r}\in \overline{R}_1\setminus\{h_2,h_{x-1}\}$, and without loss of generality let $\bar{r}=h_{3j-1}$ with $j\in [y-1]\setminus\{1\}$.
By the above argument, the conclusion holds clearly if $\overline{R}_1=\{h_2,h_{x-1}\}$.
While $|\overline{R}_1\setminus\{h_2,h_{x-1}\}|\neq0$, then $y\geq 3$ and it means that $|R_1|\geq 4$.
Since $h_{3j-3}$, $h_{3j-2}$, $h_{3j-1}$ and $h_{3j}$ are four consecutive vertices of $H$,
applying Claim 2, $\{h_{3j-3},h_{3j-2},h_{3j}\}\subseteq N_G(h_{3j-1})$.
Using the same argument, we can get $\{h_3,h_4\}\subseteq N_G(h_{3j-1})$, that is, $\bar{r}=\bar{r}_1$. It is a contradiction, again.
Then we have $r\not\in \overline{R}_1$. By symmetry, $\bar{r}_1\not\in \overline{R}_2$.

Next assume that $\bar{r}\in \overline{R}_2$ and $\bar{r}_1\in \overline{R}_1$.
If $\bar{r}=h_n$ and $\bar{r}_1=h_2$, it implies that $h_{n-1}$ and $h_3$ are two 2-vertices of $G$.
Similar to the above, we can obtain that $d_G(h_n,h_s)=1$ and $d_G(h_2,h_{s^{\prime}})=1$ for any $s\in[n-1]\setminus [x-1]$ and $s^{\prime}\in [x]\setminus \{2\}$.
The fact that $G$ is a maximal outerplane graph implies that either $d_G(h_1,h_x)=1$ or $d_G(h_n,h_2)=1$.
But if $d_G(h_1,h_x)=1$, then $d_G(h_{n-1},h_3)=4=d_H(h_{n-1},h_3)$, which leads to $h_1\in I_G(h_{n-1},h_3)$ contradicting with our assumption.
Thus we have $d_G(h_n,h_2)=1$, then  $G\cong G_1(1)$.
For the remaining cases, we can apply the similar method as the above repeatedly and obtain that $G\in \mathcal{G^*}$.

\medskip\noindent
{\bf Case 3.} $n=3k+2$.\\
In this case, $gp(G)=2k+1$. We can get $G\cong F_{n-1}$, this proof is similar to that of Case 1, hence omitted here.
\end{proof}

\subsection{Maximal outerplane graphs without internal triangles }
\label{Sub:maxi-out}

In this subsection, we determine the bounds on the gp-numbers for striped maximal outerplane graphs.
For that, we prove one lemma that will be useful in this proof.

Having proved Lemma \ref{D-M}, it would be interesting to know when the lower bound is achieved.
For the sake of this, we introduce the following definition.

\begin{definition}(\cite{Barrett:2019})
For any $n\geq 3$,  if $G_n$ is a connected graph with $V(G_n)=\{v_1,v_2,\ldots,v_n\}$ and $v_iv_j\in E(G_n)$ if and only if $0<|i-j|\leq 2$, then we call $G_n$ the straight linear 2-tree.
\end{definition}

\begin{figure}[htbp]
\centering
\begin{tikzpicture}[scale=1]

	\node[fill=black,circle,inner sep=2pt] at (0,0) {};
	\node[fill=black,circle,inner sep=2pt] at (1.5,0) {};
    \node[fill=black,circle,inner sep=2pt] at (3,0) {};
    \node[fill=black,circle,inner sep=2pt] at (5,0) {};
    \node[fill=black,circle,inner sep=2pt] at (6.5,0) {};
    \node[fill=black,circle,inner sep=2pt] at (8,0) {};
    \node[fill=black,circle,inner sep=2pt] at (0.75,1.5) {};
    \node[fill=black,circle,inner sep=2pt] at (2.25,1.5) {};
    \node[fill=black,circle,inner sep=2pt] at (3.75,1.5) {};
    \node[fill=black,circle,inner sep=2pt] at (5.75,1.5) {};
    \node[fill=black,circle,inner sep=2pt] at (7.25,1.5) {};
    \node [below=0.5mm ] at (0,0) {$v_1$};
    \node [below=0.5mm] at (1.5,0) {$v_3$};
    \node [below=0.5mm] at (3,0) {$v_5$};
    \node [below=0.5mm] at (5,0) {$v_{n-4}$};
    \node [below=0.5mm] at (6.5,0) {$v_{n-2}$};
    \node [below=0.5mm] at (8,0) {$v_n$};
    \node [above=0.5mm] at (0.75,1.5) {$v_2$};
    \node [above=0.5mm] at (2.25,1.5) {$v_4$};
    \node [above=0.5mm] at (3.75,1.5) {$v_6$};
    \node [above=0.5mm] at (5.75,1.5) {$v_{n-3}$};
    \node [above=0.5mm] at (7.25,1.5) {$v_{n-1}$};

	\draw [thick] (0,0)--(1.5,0);
	\draw [thick] (1.5,0)--(3,0);
    \draw [thick][dashed] (3,0)--(5,0);
	\draw [thick](5,0)--(6.5,0);
    \draw [thick](6.5,0)--(8,0);
    \draw [thick](0.75,1.5)--(2.25,1.5);
    \draw [thick](2.25,1.5)--(3.75,1.5);
    \draw [thick][dashed] (3.75,1.5)--(5.75,1.5);
    \draw [thick](5.75,1.5)--(7.25,1.5);
    \draw [thick](0,0)--(0.75,1.5);
    \draw [thick](1.5,0)--(2.25,1.5);
    \draw [thick](3,0)--(3.75,1.5);
    \draw [thick](5,0)--(5.75,1.5);
    \draw [thick](6.5,0)--(7.25,1.5);
    \draw [thick](8,0)--(7.25,1.5);
    \draw [thick](1.5,0)--(0.75,1.5);
    \draw [thick](3,0)--(2.25,1.5);
    \draw [thick](6.5,0)--(5.75,1.5);
\end{tikzpicture}
\caption{Straight linear 2-tree $G_n$. }\label{figure2}
\end{figure}
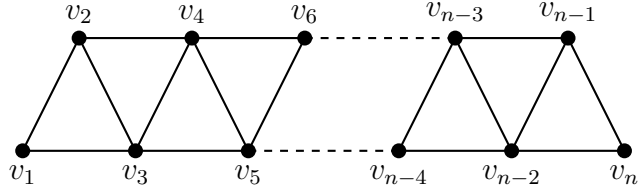

We observe that the straight linear 2-tree $G_n$ is a striped maximal outerplane graph with $\Delta(G_n)=4$. Notice that $G_n$ has exactly two 3-vertices and $n-4$ 4-vertices.

\begin{lemma}\label{MOP-4}
Let $G$ be a maximal outerplane graph of order $n\geq 7$. Then $G\cong G_n$ if and only if $\Delta(G)=4$.
\end{lemma}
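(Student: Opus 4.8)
The forward implication is immediate: the paragraph preceding the statement already records that the straight linear $2$-tree $G_n$ is a maximal outerplane graph with $\Delta(G_n)=4$, so $G\cong G_n$ gives $\Delta(G)=4$. The substance is the converse, and my plan is to first pin down the degree sequence by counting, then eliminate internal triangles, and finally establish uniqueness of the triangulation by peeling ears. For the counting step, recall that a maximal outerplane graph on $n$ vertices has $2n-3$ edges, so $\sum_{v}d_G(v)=4n-6$. Writing $n_2,n_3,n_4$ for the numbers of vertices of degree $2,3,4$ and using $\Delta(G)=4$, the equations $n_2+n_3+n_4=n$ and $2n_2+3n_3+4n_4=4n-6$ combine to $2n_2+n_3=6$. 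Since by Lemma~\ref{In-k} (with $k\ge 0$) every maximal outerplane graph has at least two $2$-vertices, $n_2\ge 2$, leaving only $(n_2,n_3)=(2,2)$ and $(n_2,n_3)=(3,0)$.

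Next I would rule out internal triangles, which by Lemma~\ref{In-k} is exactly the case $n_2=3$. Suppose $xyz$ is an internal triangle. No two of $x,y,z$ are consecutive on the Hamiltonian cycle $H$ (otherwise the triangle would carry a boundary edge), so each of $x,y,z$ has both its $H$-neighbours among the arc-vertices, and each of the three chords $xy,yz,zx$ cuts off a nonempty region. Hence $x$ has a neighbour in each of its two incident regions, which together with $y,z$ forces $d_G(x)=4$ and exactly one neighbour per region; the same holds for $y$ and $z$. Looking inside the region bounded by $xy$, the triangle on the chord $xy$ must use $x$'s unique neighbour there, so it is $xyw_1$ with $w_1$ the $H$-neighbour of $x$. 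If that region contained a second arc-vertex, then $y$ would have two neighbours in it (namely $w_1$ and its own $H$-neighbour) in addition to $x,z$ and a neighbour in the region bounded by $yz$, giving $d_G(y)\ge 5$, a contradiction. Thus every region has exactly one arc-vertex and $n=6$, contradicting $n\ge 7$. So $G$ is striped, $(n_2,n_3,n_4)=(2,2,n-4)$, and the dual of the triangulation is a path.

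Finally I would prove $G\cong G_n$ by induction on $n$, peeling an ear. The base is the classification of striped maximal outerplane graphs on $6$ vertices: up to isomorphism there are only the fan $F_5$ (with $\Delta=5$) and $G_6$, so $\Delta\le 4$ forces $G\cong G_6$. For $n\ge 7$, let $a$ be an ear with neighbours $h_2,h_n$ and set $G'=G-a$; then $G'$ is a striped maximal outerplane graph on $n-1\ge 6$ vertices with $\Delta(G')\le\Delta(G)=4$. When $n-1\ge 7$ we have $3(n-1)<4(n-1)-6$, so $\Delta(G')\ge 4$ and hence $\Delta(G')=4$, giving $G'\cong G_{n-1}$ by the inductive hypothesis; when $n-1=6$ the base classification gives $G'\cong G_6$ directly. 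Now $G$ is recovered from $G'$ by re-attaching the ear $a$ to the boundary edge $h_2h_n$, and since $d_{G'}(h_2),d_{G'}(h_n)\le 3$ this edge must have both endpoints of degree at most $3$ in $G_{n-1}$. In $G_{n-1}$ the only such boundary edges are the two end edges $v_1v_2$ and $v_{n-2}v_{n-1}$, and attaching a new $2$-vertex to either of them yields precisely $G_n$.

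Two places carry the real work. Eliminating the internal triangle is a delicate local degree count, but it is self-contained and uses only $\Delta(G)=4$ and $n\ge 7$. The genuine obstacle is the rigidity in the last step, namely that $\Delta(G)=4$ leaves no freedom in how the triangulation is assembled: one must verify that the low-degree vertices $v_2$ and $v_{n-2}$ of $G_{n-1}$ are non-adjacent for $n-1\ge 6$, so that the only degree-$4$-preserving ears sit at the two extreme edges, and that the isomorphism $G'\cong G_{n-1}$ indeed carries $h_2h_n$ onto such an end edge. Anchoring this induction in the explicit $6$-vertex classification is what makes the argument terminate cleanly.
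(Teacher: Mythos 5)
Your proposal is correct, but it takes a genuinely different route from the paper's. The paper argues directly and globally: after its Claim~1 (no internal triangles, hence exactly two $2$-vertices by Lemma~\ref{In-k}, via essentially the same local degree count you use), it reconstructs $G$ in place --- a second claim pins down the neighbourhood of each $4$-vertex, it then shows every $2$-vertex is flanked by one $3$-vertex and one $4$-vertex so that $G$ has exactly $n-4$ vertices of degree $4$, and finally a chord-propagation argument shows the two $2$-vertices are antipodal on the Hamiltonian cycle ($d_H(h_i,h_j)=\lfloor\frac{n}{2}\rfloor$), which forces $G\cong G_n$. You instead extract the degree sequence for free from the edge count $|E(G)|=2n-3$ (giving $2n_2+n_3=6$, hence $(n_2,n_3)\in\{(2,2),(3,0)\}$), kill the internal triangle by a self-contained region count that forces $n=6$, and finish by induction, peeling a $2$-vertex ear and re-attaching it. Since the statement genuinely fails at $n=6$ (the hexagon triangulation with internal triangle $v_1v_3v_5$ also has $\Delta=4$), anchoring your base in the classification of \emph{striped} hexagon triangulations is exactly what makes the induction terminate --- but it obliges you to check that stripedness survives ear deletion, which you assert without proof; that check is one line (the Hamiltonian cycle of $G'=G-a$ is $H-a$ together with the edge $h_2h_n$, so every chord of $G'$ is a chord of $G$ and any internal triangle of $G'$ would already be internal in $G$), as is your flagged rigidity step (in $G_{n-1}$ the only edges with both endpoints of degree at most $3$ are the end edges $v_1v_2$ and $v_{n-2}v_{n-1}$, because the four low-degree vertices form two pairs whose indices differ by at least $3$, so no cross edge exists once $n-1\geq 6$), so neither is a real gap. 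As for what each approach buys: the paper's reconstruction is self-contained and yields extra structural information along the way (the exact degree pattern around the cycle and the antipodality of the two $2$-vertices), while your counting-plus-induction argument is shorter, replaces the paper's somewhat delicate chord-propagation with a routine degree-sum computation, and makes transparent precisely where the hypothesis $n\geq 7$ enters.
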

\begin{proof}
By the definition of $G_n$, it is clear to see that $\Delta(G)=4$ if $G\cong G_n$. For the converse, suppose that the maximal degree of $G$ is 4.
Let $H$ be a Hamiltonian cycle of $G$ with $V(H)=\{h_1,h_2,\ldots,h_{n}\}$ and $S_{uv}$ be an $u$,$v$-segment on $H$ for any $u$, $v\in V(H)$.
In order to obtain our result, we will prove that two claims in the following.

\medskip\noindent
{\bf Claim 1.} There are exactly two 2-vertices in $G$.\\
Suppose that there exists an internal triangle $T$ in $G$ with vertices $\{h_i,h_j,h_k\}$.
It implies that $\min\{d_H(h_i,h_j),d_H(h_j,h_k),d_H(h_i,h_k)\}\geq 2$, otherwise $T$ is a marginal triangle contradicts  our assumption.
Since $T$ is an internal triangle and $\Delta(G)=4$, $d_G(h_i)=d_G(h_j)=d_G(h_k)=4$.
Assume, without loss of generality, that $h_i$, $h_j$, $h_k$ lie on $H$ with clockwise order.
Since $n\geq 7$, $\max\{d_H(h_i,h_j),d_H(h_j,h_k),d_H(h_i,h_k)\}\geq 3$.
Without loss of generality, let $d_H(h_i,h_j)\geq 3$ and $S_{h_ih_j}=h_i u_1 u_2\ldots u_\ell h_j$ with $\ell\geq 2$.
As $G$ is a maximal outerplane graph, there must be one vertex $u_s$ such that the induce subgraph on $\{h_i,h_j,u_s\}$ is a $K_3$ for $s\in[\ell]$.
It follows that $\max\{d_G(h_i),d_G(h_j)\}\geq 5$, which is a contradiction.
Thus, there exists no internal triangle in $G$.
By Lemma \ref{In-k}, $G$ has exactly two 2-vertices, completing this claim.

\medskip\noindent
{\bf Claim 2.} If $N_G(h_i)=\{x,y,z,w\}$ with $h_i\in I_H(x,y)$, then $d_H(z,w)=1$ for any $i\in [n]$.\\
Assume, without loss of generality, that $x$, $h_i$, $y$, $z$ and $w$ lie on $H$ with clockwise order.
By Lemma \ref{C-M}, $N_G[h_i]$ forms a maximal fan in $G$ with the central vertex $h_i$, then we have $d_G(z,w)=1$.
Note that $d_H(z,w)\geq d_G(z,w)$. Thus $d_H(z,w)\geq 1$.
Indeed $d_H(z,w)=1$. If not, let $S_{zw}=zv_1v_2\ldots v_\ell w$ with $\ell\geq 1$ be a $z$, $w$-segment on $H$. Then $d_G(z)=d_G(w)=4$.
Since the maximal outerplanarity of $G$, there must be one vertex $v_s$ such that the induced subgraph on $z$, $v_s$ and $w$ is a $K_3$ for $s\in [\ell]$.
It means that $\max\{d_G(z),d_G(w)\}\geq 5$, which contradicts $\Delta(G)=4$.
Thus we have $d_H(z,w)=1$, as desired.

Assume that $h_i$, $h_j$ and $h_k$ are any consecutive three vertices of $H$ in clockwise order. Then $N_H(h_j)=\{h_i,h_k\}$.
Next, we will show that $d_G(h_i)=3$ and $d_G(h_k)=4$ if $h_j$ is a 2-vertex of $G$.
If $d_G(h_j)=2$, we have $d_G(h_i,h_k)=1$ and thus $\min\{d_G(h_i),d_G(h_k)\}\geq 3$.
Note that $n\geq 7$. There must be another vertex $h\in V(H)\setminus\{h_i,h_j,h_k\}$ such that the induced subgraph on $\{h,h_i,h_k\}$ is a $K_3$.
Otherwise, there must be a cycle $C_4$ in $G$, which is impossible.
Thus, we can get $\max\{d_G(h_i),d_G(h_k)\}=4$. Furthermore, if $d_G(h_i)=d_G(h_k)=4$, by Lemma \ref{C-M}, there must be one vertex $h\in N_G(h_i)\setminus\{h_j,h_k\}$
such that the induced subgraph on $\{h_i,h_k,h\}$ is a $K_3$. Then $d_H(h_i,h)\geq 2$ and $d_H(h_k,h)\geq 2$.
Thus $\{h_i,h_k,h\}$ forms an internal triangle in $G$, which contradicts Claim 1. Hence, there exactly one 3-vertex in $h_i$ and $h_k$.
Applying Claim 1, it means that $G$ has exactly two pairs of consecutive vertices $u$ and $v$ such that $u$ is a 2-vertex and $v$ is an 3-vertex.

Without loss of generality let $h_i$ be an 3-vertex of $G$ and $N_H(h_i)=\{h_{i^{\prime}}, h_j\}$. Then $N_G(h_i)=\{h_{i^{\prime}}, h_j,h_k\}$.
Based on the above, we will further show that $d_G(h_{i^{\prime}})=4$.
As $n\geq 7$, there must exist one vertex $h^{\prime}\in V(H)\setminus N_G[h_i]$ such that $\{h_{i^{\prime}},h_k,h^{\prime}\}$ forms a $K_3$ in $G$.
Then we have $d_G(h^{\prime})=4$, thus $G$ has $n-4$ 4-vertices.

Let $h_i$ and $h_j$ be two 2-vertices of $G$ with $i<j$. Then we will prove that $d_H(h_i,h_j)=\lfloor\frac{n}{2}\rfloor$ in the following.
Assume that $S_{h_ih_j}=h_iu_1u_2\ldots u_k h_j$ and $S_{h_jh_i}=h_jv_1v_2\ldots v_\ell h_i$ are two segments of $G$ with its clockwise order on $H$ and $\min\{k,\ell\}\geq 1$. Then $k+\ell=n-2$. It also follows that $N_H(h_i)=\{u_1,v_\ell\}$ and $N_H(h_j)=\{u_k,v_1\}$.
If $d_H(h_i,h_j)<\lfloor\frac{n}{2}\rfloor$, then $|k-\ell|\geq 2$.
Suppose, without loss of generality, that $\ell> k$, i.e., $\ell=k+2$.
Since $h_i$ and $h_j$ are two 2-vertices of $G$, then we have $d_G(u_1,v_\ell)=1$ and $d_G(u_k,v_1)=1$.
By the above argument, we know that both $h_i$ and $h_j$ have a neighbor vertex of degree 3 in $G$.
Without loss of generality, let $d_G(u_1)=3$.
Then $d_G(v_\ell)=4$. It implies that there must be another vertex $h\in N_G(u_\ell)$ except $h_i$, $u_1$ and $v_{\ell-1}$,
by Claim 2, we can obtain $d_H(u_1,h)=1$. It leads to $h=u_2$, that is, $d_G(v_\ell,u_2)=1$.
Similarly, $d_G(v_{\ell-1},u_2)=1$ if $d_G(u_2)=4$.
Recall that $\ell>k$ and $G$ has $n-4$ 4-vertices.
Analogously, $d_G(u_s,v_{\ell-s+1})=1$ and $d_G(u_s,v_{\ell-s+2})=1$ for any $s\in[k]\setminus\{1\}$.
Since $d_G(u_k,v_1)=1$ and $\ell-k+1\geq 2$, it follows that $d_G(u_k)\geq 5$, which is impossible.
Similarly, we can get the same contradiction if $d_G(u_1)=4$.
In consequence, we can conclude that $d_H(h_i,h_j)=\lfloor\frac{n}{2}\rfloor$.

By the above Claims and arguments, we can obtain that $G\cong G_n$ if $\Delta(G)=4$, as desired.
\end{proof}

With the help of Lemma \ref{MOP-4}, we can present an extremal graph for achieving the lower bound in Lemma \ref{D-M}.
According to Theorem \ref{F^*-I-N}, we determine an upper bound on the gp-number of $G$ without internal triangle and characterize its corresponding extremal graphs in the following.

\begin{theorem}\label{SM-G}
Let $G$ be a striped maximal outerplane graph of order $n\geq 5$. Then we have
$$3\leq gp(G)\leq \lfloor\frac{2n}{3}\rfloor$$
with left equality iff $G\cong G_n$ and right equality iff $G\in\{F_{n-1}, F(1;n),F(n-2;n),\\G_1(1),G_2(t)\}$.
\end{theorem}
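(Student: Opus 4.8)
The plan is to prove the two-sided bound and then settle the two equality cases separately, leaning heavily on Theorem~\ref{F^*-I-N} and Lemma~\ref{MOP-4}. The upper bound $gp(G)\le\lfloor\frac{2n}{3}\rfloor$ and the characterization of its extremal graphs are immediate once I intersect the general extremal family from Theorem~\ref{F^*-I-N} with the class of striped MOPs. Explicitly, when $n\not\equiv1\pmod 3$ the equality family there is $\{F_{n-1}\}$, and $F_{n-1}$ is striped (a fan has no internal triangle), so the right-hand extremal set is $\{F_{n-1}\}$; when $n\equiv1\pmod3$ the family is $\mathcal F\cup\mathcal G^*$, and I must decide which of the quasi-fans $F(i;n)$ and the double-fan graphs $G_k(j)$ are striped. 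I expect that the only striped members are $F(1;n)$, $F(n-2;n)$ (the two quasi-fans whose extra vertex is attached at the very end of the path, so no internal triangle is created) together with $G_1(1)$ and $G_2(t)$; the remaining $F(i;n)$ with $2\le i\le n-3$ and the generic $G_k(j)$ force an internal triangle. So the first real task is a careful but routine check, directly from the definitions of $F(i;n)$ and $G_k(j)$ and the notion of internal triangle, of exactly which of these graphs avoid internal triangles, yielding the stated list $\{F_{n-1},F(1;n),F(n-2;n),G_1(1),G_2(t)\}$.

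For the lower bound I would argue that $gp(G)\ge3$ holds for every MOP (already noted in the preliminaries), so the content is the equality case. Here the strategy is to invoke Lemma~\ref{D-M}: since $gp(G)\ge\lfloor\frac{2(\Delta(G)+1)}{3}\rfloor$, equality $gp(G)=3$ forces $\Delta(G)$ to be small, and a short computation shows $\lfloor\frac{2(\Delta+1)}{3}\rfloor\le3$ only for $\Delta\le4$. A striped MOP of order $n\ge5$ has $\Delta(G)\ge4$ (a striped MOP has exactly two $2$-vertices by Lemma~\ref{In-k}, and for $n\ge5$ the remaining vertices cannot all have degree $3$ while keeping the graph a triangulated disc, forcing a $4$-vertex), so in fact $\Delta(G)=4$ is forced. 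By Lemma~\ref{MOP-4} this gives $G\cong G_n$. Conversely I must verify $gp(G_n)=3$: using the explicit $2$-tree structure of $G_n$ one checks that any four vertices contain a geodesic triple, while some three vertices (e.g.\ $v_1,v_2,v_3$, a triangle, which is in general position) attain $3$. This direction I expect to be short, handled by the characterization in Lemma~\ref{GP-C} together with the distance structure of the straight linear $2$-tree.

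The main obstacle will be the bookkeeping in the first task: verifying precisely which quasi-fans and double-fans are striped, and—more delicately—the argument that a striped MOP with $n\ge5$ cannot have $\Delta(G)=3$, which is what pins down $G\cong G_n$ in the lower-bound equality case. For the latter I would count: if $\Delta(G)=3$ then by Lemma~\ref{In-k} the two $2$-vertices and all remaining $3$-vertices give a degree sum of $2\cdot2+3(n-2)=3n-2$, whereas a triangulation of the disc on $n$ vertices has $\sum d_G(v)=2|E(G)|=2(2n-3)=4n-6$ by Lemma~\ref{Diam}; comparing $3n-2<4n-6$ for $n\ge5$ shows a $3$-regular-except-two-leaves profile is infeasible, so some vertex has degree $\ge4$, i.e.\ $\Delta(G)\ge4$. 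Combined with the upper bound $\Delta(G)=4$ forced by $gp(G)=3$ via Lemma~\ref{D-M}, this yields $\Delta(G)=4$ and hence $G\cong G_n$. Once these two pieces are in place, the theorem follows by assembling the upper-bound half (Theorem~\ref{F^*-I-N} restricted to striped graphs) and the lower-bound half (Lemmas~\ref{D-M} and \ref{MOP-4}).
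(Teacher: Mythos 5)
Your proposal follows essentially the same route as the paper: the upper bound and its extremal family come from restricting Theorem~\ref{F^*-I-N} to striped graphs (your case check of which $F(i;n)$ and $G_k(j)$ avoid internal triangles matches the paper's asserted list), and the left equality is settled exactly as in the paper via Lemma~\ref{D-M} forcing $\Delta(G)=4$, Lemma~\ref{MOP-4} giving $G\cong G_n$, and a direct verification that $gp(G_n)=3$. Your degree-sum count ($2\cdot 2+3(n-2)=3n-2<4n-6$) even supplies an explicit justification for the step the paper only asserts, namely that a striped MOP with $n\ge 5$ cannot have $\Delta(G)\le 3$.
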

\begin{proof}
Let $R$ be a gp-set of $G$.
By Theorem \ref{F^*-I-N}, the upper bound holds clearly. Furthermore, since $G$ has no internal triangles, then we have $G\in \{F_{n-1}, F(1;n),\\F(n-2;n),G_1(1),G_2(t)\}$.

We observe that $|R|\geq 3$. Next, it needs to prove that the left equality holds.
Assume first that $G$ is a maximal outerplane graph satisfying $gp(G)=3$. Recall that $G$ is a 2-connected graph.
Applying  Lemma \ref{D-M}, we can get $gp(G)\geq 4$ if $\Delta(G)\geq 5$. Thus, based on our assumption, we have $2\leq d_G(v)\leq 4$ for any vertex $v\in V(G)$.
But if $\Delta(G)\leq 3$, then $G$ is not a maximal outerplane graph contradicting the maximal outerplanarity of $G$.
Then $\Delta(G)=4$ if $gp(G)=3$. By Lemma \ref{MOP-4}, it is easy to see that $G\cong G_n$.

Suppose now that $G\cong G_n$.
Then we will show that $|R|\leq 3$ in what follows. Let $H$ be a Hamiltonian cycle of $G$ and $V(H)=\{h_0,h_1,\ldots,h_{n-1}\}$ with clockwise order on $H$.
Conversely, suppose that $|R|\geq 4$, which means that $|V(H)\cap R|\geq 4$.
For any positive integer $d\geq 2$, if $n=2d$, we may assume without loss of generality, that $h_0$, $h_d$ are two 2-vertices and $h_1$, $h_{d+1}$ are two 3-vertices of $G$, respectively.
By the structure of $G$, $d_G(h_i,h_{n-i+1})=1$ and $d_G(h_i,h_{n-i})=1$ for any $i\in[d-1]\setminus\{1\}$.
Let $S_{h_0h_d}=h_0h_1\ldots h_d$ and $S_{h_dh_0}=h_d h_{d+1}\ldots h_n h_0$ be two segments on $H$, denoted by $h_0$, $h_d$-segment and $h_d$, $h_0$-segment, respectively.
It is clear to see that $|V(S_{h_0h_d})\cap R|\leq 2$. Otherwise there must be three vertices in $V(S_{h_0h_d})\cap R$ lying on the same geodesic in $G$, which is impossible.
Analogously, $|V(S_{h_dh_0})\cap R|\leq 2$.
Based on our assumption, we have $|R|=4$ and thus $|V(S_{h_0h_d})\cap R|=2=|V(S_{h_dh_0})\cap R|$.

Assume, without loss of generality, that $V(S_{h_0h_d})\cap R=\{x,y\}$ and $V(S_{h_dh_0})\cap R=\{z,w\}$ with clockwise order on $H$.
Since $H$ is a cycle, there must be three vertices in $R\cap V(H)$ such that they lying on the same geodesic in $H$.
Without loss of generality, let $y\in I_H(x,z)$.
By the structure of $G$, it is obvious that $d_G(x,z)=d_G(x,y)+d_G(y,z)$. Then we have $y\in I_G(x,z)$, it contradicts $\{x,y,z\}\subseteq R$.
The similar contradiction can be obtained if $z\in I_G(y,w)$ or $w\in I_G(x,z)$.
Thus we have $|R|\leq 3$.

Using the similar arguments as above, we can obtain the same conclusion if $n$ is odd. Hence, $|R|=3$ if $G\cong G_n$.
We complete this proof of the theorem.
\end{proof}

\subsection{Maximal outerplane graphs with internal triangles }
\label{Sub:maxi-out-internal}

In this subsection, we concentrate on the bounds on the gp-numbers of $G$ with internal triangles. We will give a necessary condition for attaining the lower bound. And we also characterize some extremal graphs when the upper bound is achieved.

It is easy to see from Lemma \ref{In-k} that  the number of 2-vertices is related to the number of internal triangles in $G$.
Hence, the characterization of the bound on the number of 2-vertices in $G$ is very important in our remaining proof.
First we give some useful definitions and lemmas as preparation in what follows. Let $m\geq 3$ be a positive integer.

\begin{definition}(\cite{Gallian:2014})
The sunflower graph $SF_{2m+1}$ is a graph obtained by taking a wheel with the central vertex $v$ and the $m$-cycle $v_0$, $v_1$,\ldots,$v_{m-1}$ combined with additional vertices $u_0$, $u_1$,\ldots,$u_{m-1}$, where $u_i$ is joined by edges to $v_i$, $v_{i+1}$ and $i+1$ is taken from modulo $m$.
\end{definition}
The sunflower graph $SF_9$ can be depicted as in the left graph of Figure 3.

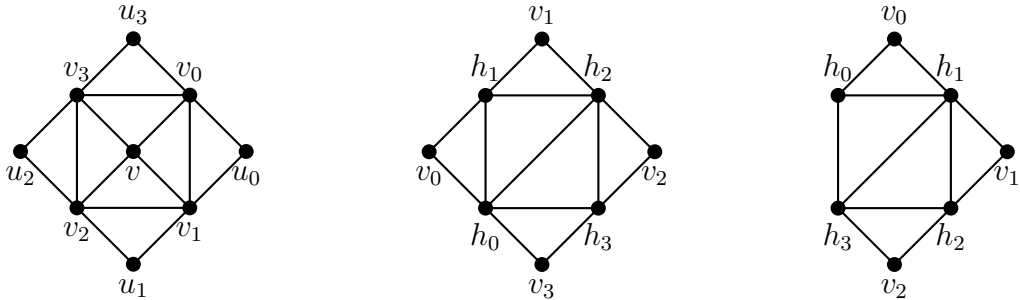
\begin{figure}[htbp]
\centering
\begin{tikzpicture}[scale=1.5]

	\node[fill=black,circle,inner sep=2pt] at (0,0) {};
	\node[fill=black,circle,inner sep=2pt] at (1,0) {};
    \node[fill=black,circle,inner sep=2pt] at (-1,0) {};
    \node[fill=black,circle,inner sep=2pt] at (-0.5,-0.5) {};
    \node[fill=black,circle,inner sep=2pt] at (0,-1) {};
    \node[fill=black,circle,inner sep=2pt] at (0.5,-0.5) {};
    \node[fill=black,circle,inner sep=2pt] at (-0.5,0.5) {};
    \node[fill=black,circle,inner sep=2pt] at (0.5,0.5) {};
    \node[fill=black,circle,inner sep=2pt] at (0,1) {};

    \node [below=0.5mm ] at (0,0) {$v$};
    \node [below=0.5mm ] at (1,0) {$u_0$};
    \node [below=0.5mm ] at (-1,0) {$u_2$};
    \node [below=0.5mm ] at (-0.5,-0.5) {$v_2$};
    \node [below=0.5mm ] at (0.5,-0.5) {$v_1$};
    \node [above=0.5mm ] at (0.5,0.5) {$v_0$};
    \node [above=0.5mm ] at (-0.5,0.5) {$v_3$};
    \node [below=0.5mm ] at (0,-1) {$u_1$};
    \node [above=0.5mm ] at (0,1) {$u_3$};

	\draw [thick] (0,0)--(0.5,-0.5);
    \draw [thick] (0,0)--(-0.5,-0.5);
    \draw [thick] (0,0)--(0.5,0.5);
    \draw [thick] (0,0)--(-0.5,0.5);
    \draw [thick] (0,-1)--(0.5,-0.5);
    \draw [thick] (0,-1)--(-0.5,-0.5);
    \draw [thick] (-0.5,-0.5)--(0.5,-0.5);
    \draw [thick] (0.5,-0.5)--(0.5,0.5);
    \draw [thick] (1,0)--(0.5,-0.5);
    \draw [thick] (1,0)--(0.5,0.5);
    \draw [thick] (0.5,0.5)--(-0.5,0.5);
    \draw [thick] (0.5,0.5)--(0,1);
    \draw [thick] (-0.5,0.5)--(0,1);
    \draw [thick] (-0.5,0.5)--(-0.5,-0.5);
    \draw [thick] (-0.5,0.5)--(-1,0);
    \draw [thick] (-0.5,-0.5)--(-1,0);
\end{tikzpicture}
\hspace{15mm}
\begin{tikzpicture}[scale=1.5]

	\node[fill=black,circle,inner sep=2pt] at (0,0) {};
	\node[fill=black,circle,inner sep=2pt] at (1,0) {};
    \node[fill=black,circle,inner sep=2pt] at (1,1) {};
    \node[fill=black,circle,inner sep=2pt] at (0,1) {};
    \node[fill=black,circle,inner sep=2pt] at (0.5,-0.5) {};
    \node[fill=black,circle,inner sep=2pt] at (-0.5,0.5) {};
    \node[fill=black,circle,inner sep=2pt] at (0.5,1.5) {};
    \node[fill=black,circle,inner sep=2pt] at (1.5,0.5) {};

     \node [below=0.5mm ] at (0,0) {$h_0$};
     \node [below=0.5mm ] at (1,0) {$h_3$};
     \node [above=0.5mm ] at (0,1) {$h_1$};
     \node [above=0.5mm ] at (1,1) {$h_2$};
     \node [below=0.5mm ] at (0.5,-0.5){$v_3$};
     \node [below=0.5mm ] at (-0.5,0.5){$v_0$};
     \node [above=0.5mm ] at (0.5,1.5){$v_1$};
     \node [below=0.5mm ] at (1.5,0.5){$v_2$};

	\draw [thick] (0,0)--(0.5,-0.5);
    \draw [thick] (0,0)--(-0.5,0.5);
    \draw [thick] (0,0)--(0,1);
    \draw [thick] (0,0)--(1,1);
    \draw [thick] (0,0)--(1,0);
    \draw [thick] (-0.5,0.5)--(0,1);
    \draw [thick] (0,1)--(0.5,1.5);
    \draw [thick] (0,1)--(1,1);
    \draw [thick] (1,1)--(0.5,1.5);
    \draw [thick] (1,1)--(1.5,0.5);
    \draw [thick] (1,1)--(1,0);
    \draw [thick] (1,0)--(1.5,0.5);
    \draw [thick] (1,0)--(0.5,-0.5);
\end{tikzpicture}
\hspace{15mm}
\begin{tikzpicture}[scale=1.5]

	\node[fill=black,circle,inner sep=2pt] at (0,0) {};
	\node[fill=black,circle,inner sep=2pt] at (1,0) {};
    \node[fill=black,circle,inner sep=2pt] at (1,1) {};
    \node[fill=black,circle,inner sep=2pt] at (0,1) {};
    \node[fill=black,circle,inner sep=2pt] at (0.5,-0.5) {};
    \node[fill=black,circle,inner sep=2pt] at (0.5,1.5) {};
    \node[fill=black,circle,inner sep=2pt] at (1.5,0.5) {};

    \node [below=0.5mm ] at (0,0) {$h_3$};
     \node [below=0.5mm ] at (1,0) {$h_2$};
     \node [above=0.5mm ] at (0,1) {$h_0$};
     \node [above=0.5mm ] at (1,1) {$h_1$};
     \node [below=0.5mm ] at (0.5,-0.5){$v_2$};
     \node [above=0.5mm ] at (0.5,1.5){$v_0$};
     \node [below=0.5mm ] at (1.5,0.5){$v_1$};

	\draw [thick] (0,0)--(0.5,-0.5);
    \draw [thick] (0,0)--(0,1);
    \draw [thick] (0,0)--(1,1);
    \draw [thick] (0,0)--(1,0);
    \draw [thick] (0,1)--(0.5,1.5);
    \draw [thick] (0,1)--(1,1);
    \draw [thick] (1,1)--(0.5,1.5);
    \draw [thick] (1,1)--(1.5,0.5);
    \draw [thick] (1,1)--(1,0);
    \draw [thick] (1,0)--(1.5,0.5);
    \draw [thick] (1,0)--(0.5,-0.5);
  \end{tikzpicture}
\caption{ Sunflower graph $SF_9$ and generalized sunflower graphs $GSF_8$ and $GSF_7$. }\label{figure3}
\end{figure}

Based on the definition of sunflower graph proposed by Gallian in \cite{Gallian:2014}, we give the following definition of generalized sunflower graph.

\begin{definition}\label{G-SF}
Let $H$ be a Hamiltonian cycle of maximal outerplane graph $G$ with naturally adjacent vertices $h_0$, $h_1$,\ldots,$h_{m-1}$.
The generalized sunflower graph is a graph obtained by taking $H$ of $G$ combined with additional vertices $v_0$, $v_1$,\ldots,$v_{x-1}$,
where $v_i$ is joined by edges to $h_i$, $h_{i+1}$, where $i+1$ is taken from modulo $m$ and $x\in \{m-1,m-2\}$.
\end{definition}

In this paper, we denote the generalized sunflower graph of order $n$ by $GSF_n$.
According to the Definition \ref{G-SF}, it is easy to see that $n\in\{2m-1,2m\}$.
Moreover, the generalized sunflower graph is a maximal outerplane graph with internal triangles if $m\geq 4$.
As an example, $GSF_8$ and $GSF_7$ can be depicted as in the middle and right of Figure 3, respectively.

Next, we prove that the upper bound on the total number of 2-vertices in $G$ with internal triangle, and characterize its corresponding extremal graphs when the upper bound is achieved.

\begin{lemma}\label{I-U}
Let $G$ be a maximal outerplane graph with order $n$ and $k\geq 1$ internal triangles. Then $k\leq \lfloor\frac{n}{2}\rfloor-2$ with equality holding if and only if $G\cong GSF_n$.
\end{lemma}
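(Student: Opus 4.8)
The plan is to convert the statement about internal triangles into a statement about $2$-vertices, and then into an independent-set bound on the Hamiltonian cycle. By Lemma~\ref{In-k}, a maximal outerplane graph with $k$ internal triangles has exactly $k+2$ vertices of degree $2$, so it suffices to show that the number of $2$-vertices is at most $\lfloor\frac{n}{2}\rfloor$. First I would prove that, for $n\geq 4$, no two $2$-vertices are adjacent. Indeed, if $u$ and $v$ were adjacent $2$-vertices, then since the inner face incident with the edge $uv$ is a triangle, $u$ and $v$ would share a common neighbour $w$, which by $d_G(u)=d_G(v)=2$ must be their only other neighbour. Then $\{u,v,w\}$ induces a $K_3$ in which $w$ is a cut vertex separating $\{u,v\}$ from the rest, contradicting the fact that $G$ is $2$-connected. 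Hence the set of $2$-vertices forms an independent set of the Hamiltonian cycle $H\cong C_n$, whose independence number is $\lfloor\frac{n}{2}\rfloor$. Combining this with Lemma~\ref{In-k} gives $k+2\leq\lfloor\frac{n}{2}\rfloor$, that is, $k\leq\lfloor\frac{n}{2}\rfloor-2$.

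For the equality case I would examine precisely when the $2$-vertices attain a maximum independent set of $C_n$. Let $w_0,w_1,\ldots$ denote the non-$2$-vertices. When $n=2m$ is even, every maximum independent set of $C_{2m}$ is one of the two alternating colour classes, so $H$ reads $u_0w_0u_1w_1\cdots u_{m-1}w_{m-1}$ with each $u_i$ a $2$-vertex. Because $u_i$ lies in a triangle and has only neighbours $w_{i-1},w_i$, its unique triangle is $u_iw_{i-1}w_i$, forcing $w_{i-1}w_i\in E(G)$; thus $w_0w_1\cdots w_{m-1}w_0$ is an $m$-cycle whose interior, being part of the maximal outerplane graph $G$, is triangulated into a maximal outerplane graph on $\{w_0,\ldots,w_{m-1}\}$. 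Consequently $G$ is obtained from this inner maximal outerplane graph by attaching an ear on every boundary edge, which is exactly a generalized sunflower graph $GSF_n$ in the sense of Definition~\ref{G-SF}, here with $x=m$ ears. When $n=2m-1$ is odd, a maximum independent set of $C_{2m-1}$ has size $m-1$ and is forced to leave exactly one pair of consecutive non-$2$-vertices; the identical ear argument then yields an inner triangulated $m$-gon carrying ears on all but one of its boundary edges, i.e.\ $GSF_n$ with $x=m-1$. In either case I would confirm that no $w_i$ becomes a $2$-vertex: since at most one boundary edge is left un-eared, every $w_i$ acquires at least one ear-neighbour and hence has degree at least $3$, so the $2$-vertices of $G$ are exactly the ears.

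The converse is a direct count: if $G\cong GSF_n$, the ears are its only $2$-vertices, numbering $m=\lfloor\frac{n}{2}\rfloor$ when $n=2m$ and $m-1=\lfloor\frac{n}{2}\rfloor$ when $n=2m-1$, so Lemma~\ref{In-k} immediately gives $k=\lfloor\frac{n}{2}\rfloor-2$; one may also verify directly via Lemma~\ref{Diam} that the $m-2$ bounded faces of the inner polygon all become internal once every incident boundary edge is eared. The main obstacle I anticipate is the equality analysis rather than the inequality: the bookkeeping must carefully pin down which boundary edges receive ears in the odd case, rule out the possibility that a vertex $w_i$ survives as a $2$-vertex, and reconcile the resulting structure with the exact parametrization $x\in\{m-1,m\}$ of Definition~\ref{G-SF}. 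Throughout I would rely on Lemma~\ref{C-M} to control the local fan structure around each $w_i$ and to guarantee that the interior of the $w$-cycle is genuinely triangulated, so that the reconstructed graph is honestly a generalized sunflower graph.
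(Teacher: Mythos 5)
Your proof is correct, and its equality analysis follows the same skeleton as the paper's: both reduce to the set $D_2$ of $2$-vertices via Lemma~\ref{In-k}, use the fact that a $2$-vertex forces the chord joining its two cycle-neighbours, recover an inner $m$-cycle on the non-$2$-vertices whose interior is triangulated (so the induced subgraph there is again a maximal outerplane graph), and split into even and odd $n$. Where you genuinely diverge is in the inequality and the odd-case bookkeeping. The paper proves $k\leq\lfloor\frac{n}{2}\rfloor-2$ by a face count: by Lemma~\ref{Diam} there are $n-2$ inner triangles, $k$ of them internal, and the $k+2$ $2$-vertices lie in distinct marginal triangles, whence $n-2-k\geq k+2$. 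You instead show no two $2$-vertices are adjacent (your cut-vertex argument against $2$-connectivity is sound, since $G$ is Hamiltonian; in fact for $n\geq 4$ the two $2$-vertices cannot even have a common neighbour, so the triangular face on $uv$ is already impossible) and then invoke the independence number $\lfloor\frac{n}{2}\rfloor$ of $C_n$. This buys two things: it bypasses Lemma~\ref{Diam} entirely, and the nonadjacency fact is precisely the ingredient the paper needs anyway in the equality analysis (its observation $|V_2\cap D_2|\leq 1$), so your argument unifies the bound and the extremal case. Similarly, for odd $n$ the paper derives the claim that exactly one consecutive pair of vertices avoids $D_2$ via a two-case counting contradiction, while your gap count (total slack $m$ spread over $m-1$ gaps, each at least $1$) gives it in one line; and your converse, counting the ears as the only $2$-vertices and reapplying Lemma~\ref{In-k}, supplies a uniform justification for the paper's unproved assertion that every $GSF_n$ has $\lfloor\frac{n}{2}\rfloor-2$ internal triangles. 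One caveat worth flagging: the printed parametrization $x\in\{m-1,m-2\}$ in Definition~\ref{G-SF} is inconsistent with the paper's own claim $n\in\{2m-1,2m\}$ and with Figure~3; your reading $x\in\{m,m-1\}$ is clearly the intended one, and your reconstruction matches it.
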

\begin{proof}
By Lemma \ref{In-k}, there must be $k+2$ 2-vertices in $G$ since $G$ has $k$ internal triangles, and thus $G$ has $k+2$ marginal triangles.
Applying Lemma \ref{Diam}, we have $n-2-k\geq k+2$, that is, $k\leq \frac{n-4}{2}$. It is obvious that $k\leq \lfloor\frac{n}{2}\rfloor-2$ since $k$ is a positive integer.

Let $H$ be a Hamiltonian cycle of $G$ and $V(H)=\{h_1,h_2,\ldots,h_{n}\}$ with natural adjacencies.  Assume that $D_2$ is the set of 2-vertices in $G$ and $V_2$ is the set of two consecutive vertices of $H$. Note that $|V_2\cap D_2|\leq 1$ for any $V_2\subseteq V(H)$.
By the structure of $GSF_n$, it is easy to see that $GSF_n$ has $\lfloor\frac{n}{2}\rfloor-2$ internal triangles.
Next, we may assume that $G$ has $k$ internal triangles with $k=\lfloor\frac{n}{2}\rfloor-2$.
We will consider the following two cases according to the parity of $n$.

\medskip\noindent
{\bf Case 1.} $n$ is even.\\
In this case, $k=\frac{n}{2}-2$. Applying Lemma \ref{In-k}, it is observed that $|D_2|=k+2=\frac{n}{2}$.
According to the property of $D_2$, it is easy to see that $d_H(h,h^{\prime})\geq 2$ for any two vertices $h$, $h^{\prime}\in D_2$.
Then we have $|V_2\cap D_2|=1$ since $|D_2|=\frac{n}{2}$.
So we may assume, without loss of generality, that $D_2=\{h_{2i}:i\in[k+2]\}$. Take $\overline{D}_2=V(H)\setminus D_2=\{h_{2i-1}:i\in[k+2]\}$.

Let $u$, $v$ and $w$ be any three consecutive vertices of $H$  with clockwise order.
It is obvious that $d_G(u,w)=1$ if $v\in D_2$.
Similarly, we can get $d_H(h_{2i-1},h_{2i+1})=1$ and $d_H(h_{n-1},h_1)=1$ for any $i\in[k+1]$. It follows that there exits a cycle $C:=h_1h_3\ldots h_{n-1}h_1$ in $G$.
Since $G$ is a maximal outerplane graph, then the induced subgraph on $\overline{D}_2$ is also a maximal outerplane graph and $C$ is a Hamiltonian cycle of it.
Furthermore, the induced subgraph on $\{h_{2i},h_{2i-1},h_{2i+1}\}$ is a $K_3$ for any $i\in[k+1]$, and $h_{n-1}$, $h_n$ and $h_1$ three vertices of $G$ also induces a $K_3$.
Therefore, we have $G\cong GSF_n$.

\medskip\noindent
{\bf Case 2.} $n$ is odd.\\
In this case, we can get $k=\frac{n-5}{2}$. It follows from Lemma \ref{Diam} that $n-2-k=k+3$, which means that $G$ has $k+3$ marginal triangles.
According to Lemma \ref{In-k}, there must be $k+2$ marginal triangles containing 2-vertex in $G$.
Then we will claim that there exactly one $V_2$ such that $V_2\cap D_2=\emptyset$, where $V_2\subseteq V(H)$.

Conversely, suppose without loss of generality, that there exist two pairs adjacent of vertices $h_i$, $h_{i+1}$ and $h_j$, $h_{j+1}$ such that $\{h_i,h_{i+1},h_j,h_{j+1}\}\cap D_2=\emptyset$, where $1\leq i<j\leq n-1$.
Then $|\{h_i,h_{i+1}\}\cap\{h_j,h_{j+1}\}|\leq 1$. Recall that $|V_2\cap D_2|\leq 1$ for any $V_2\subseteq V(H)$.
If $i+1=j$, that is, $|\{h_i,h_{i+1}\}\cap\{h_j,h_{j+1}\}|=1$, then we have
$$|D_2|\leq \lceil\frac{n-3}{2}\rceil=\frac{n-3}{2}=k+1.$$
It is a contradiction.
While $i+1<j$, let $S_{h_{i+1} h_j}$, $S_{h_{j+1} h_i}$ be the $h_{i+1}$, $h_j$-segment and $h_{j+1}$, $h_i$-segment on $H$, respectively.
Since $\{h_i,h_{i+1},h_j,h_{j+1}\}\cap D_2=\emptyset$,
then we have
$$|D_2|\leq \lceil\frac{j-i-1}{2}\rceil+\lceil\frac{n-j+i-3}{2}\rceil\leq \frac{n-2}{2}<k+2.$$
It is a contradiction, again.
As a consequence, there is at most one $V_2\subseteq V(H)$ such that $V_2\cap D_2=\emptyset$.
In fact, there must be exactly one $V_2\subseteq V(H)$ such that $V_2\cap D_2=\emptyset$ since $n$ is odd, completing this claim.

By the above claim, we may assume without loss of generality, that $D_2=\{h_{2j+1}:j\in[k+2]\}$. Take $\overline{D}_2=\{h_{2j}:j\in[k+2]\}\cup\{h_1\}$.
Similar to the above Case 1, thus we have $G\cong GSF_n$.
\end{proof}

Theorem \ref{SM-G} gives a necessary and sufficient condition when the bounds on the gp-numbers for a striped maximal outerplane graph are achieved.
Then we focus on the bounds on the gp-number for a maximal outerplane graph $G$ with internal triangles.
By the structure of $GSF_7$ (see the right graph of Figure 3), it is easy to verify that $gp(GSF_7)=4$.

\begin{theorem}
For any positive integer $k\geq 1$, let $G$ be a maximal outerplane graph with $k$ internal triangles. Then
$$k+2\leq gp(G)\leq \lfloor\frac{2n}{3}\rfloor.$$
with left equality  if $G\cong GSF_n(n\geq 8)$.
Moreover, the right equality holding if and only if $G\in\big(\mathcal{F}\setminus\{F(1;n),F(n-2;n)\}\big)\cup \big(\mathcal{G}^*\setminus\{G_1(1),G_2(t)\}\big)$.
\end{theorem}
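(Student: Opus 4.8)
The plan is to establish the three assertions separately. The right-hand inequality $gp(G)\le\lfloor 2n/3\rfloor$ requires no new work: it is exactly the general upper bound of Theorem \ref{F^*-I-N}, valid for every maximal outerplane graph of order $n\ge 6$. For the left-hand inequality I would exhibit an explicit general position set of size $k+2$. By Lemma \ref{In-k} the graph has exactly $k+2$ vertices of degree $2$; let $D_2$ denote this set. In a maximal outerplane graph every $2$-vertex $v$ is an ear, so its two neighbours $a,b$ are adjacent and span with $v$ a marginal triangle. Hence $v$ can never be an interior vertex of a geodesic, for a shortest path running $\ldots a\,v\,b\ldots$ could be shortened through the edge $ab$. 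Consequently, in any triple of $D_2$ no member lies on a geodesic between the other two, so $D_2$ is a general position set and $gp(G)\ge|D_2|=k+2$.

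To obtain the left equality for $G\cong GSF_n$ with $n\ge 8$, note first that $GSF_n$ has $k=\lfloor n/2\rfloor-2$ internal triangles by Lemma \ref{I-U}, so the bound just proved gives $gp(GSF_n)\ge k+2=\lfloor n/2\rfloor$, realised by the set of its $\lfloor n/2\rfloor$ outer $2$-vertices. It remains to prove the reverse inequality $gp(GSF_n)\le\lfloor n/2\rfloor$, which is the core of the statement. I would work on the Hamiltonian cycle of $GSF_n$, which alternates the outer $2$-vertices $v_i$ with the inner vertices $h_i$, the outer vertex $v_i$ sitting on the edge $h_ih_{i+1}$.

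The hard part will be this upper bound. The mechanism forcing it is that each inner vertex $h_i$ is the midpoint of its two flanking outer vertices: since $v_{i-1}$ and $v_i$ are both adjacent to $h_i$ and are mutually non-adjacent, $d_G(v_{i-1},v_i)=2$ and $h_i\in I_G(v_{i-1},v_i)$, so if $h_i$ lies in a general position set $R$ then at most one of $v_{i-1},v_i$ does. More is true: if $h_i\in R$ then by Lemma \ref{N-x} the set $N_G(h_i)\cap R$ is a clique of size at most $2$, and a short computation shows that $h_i$ also lies on geodesics joining a retained flanking outer vertex to further outer vertices at distance two, so that admitting an inner vertex near the boundary cascades into the deletion of several outer vertices; indeed one checks that no three consecutive inner vertices and no triangle $h_ih_{i+1}v_i$ may occur (the latter by Lemma \ref{GP-S}). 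The plan is to convert this cascade into a charging argument: assign to each inner vertex of $R$ the outer vertices it forces out, and verify, using Lemma \ref{GP-C} to control the admissible component structure, that retaining any inner vertex produces a net non-positive change relative to the all-outer set, whence $|R|\le\lfloor n/2\rfloor$. Carrying this accounting through every local configuration, splitting according to whether $R$ contains a consecutive inner pair and how the retained outer vertices are distributed around the cycle, is the principal obstacle I anticipate.

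For the characterization of the upper extremal graphs I would reduce to Theorem \ref{F^*-I-N}. Since $G$ carries at least one internal triangle it is not striped, so $G\not\cong F_{n-1}$; hence by Theorem \ref{F^*-I-N} the equality $gp(G)=\lfloor 2n/3\rfloor$ can hold only when $n\equiv 1\pmod 3$ and $G\in\mathcal{F}\cup\mathcal{G}^*$. It then remains to decide which members of $\mathcal{F}\cup\mathcal{G}^*$ possess an internal triangle. For a quasi-fan $F(i;n)$ the only candidate for an internal triangle is $vp_ip_{i+1}$, whose edge $p_ip_{i+1}$ is buried by $u$; this triangle is internal precisely when neither spoke $vp_i$ nor $vp_{i+1}$ lies on the outer boundary, i.e. only when $i$ is not an extreme index, so exactly the two extreme members $F(1;n)$ and $F(n-2;n)$ are striped. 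An analogous inspection of the double-fan graphs shows that $G_1(1)$ and $G_2(t)$ degenerate to (near-)fans with no internal triangle, whereas every other $G_1(j),G_2(j)$ carries one at its gluing vertex. Discarding these four striped graphs from $\mathcal{F}\cup\mathcal{G}^*$ leaves exactly $\big(\mathcal{F}\setminus\{F(1;n),F(n-2;n)\}\big)\cup\big(\mathcal{G}^*\setminus\{G_1(1),G_2(t)\}\big)$; since each remaining graph is an extremal graph of Theorem \ref{F^*-I-N} and does have an internal triangle, the stated equivalence follows.
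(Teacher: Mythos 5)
Your treatment of the lower bound and of the characterization of the right equality is correct and matches the paper's route, and in places is more complete than the source: the paper disposes of $gp(G)\ge k+2$ with "it is easy to verify that $D_2$ is a general position set," whereas your ear argument --- a $2$-vertex has adjacent neighbours, hence is never an internal vertex of any geodesic --- actually supplies the verification; likewise, the paper settles the extremal characterization with a bare appeal to Theorem \ref{F^*-I-N}, implicitly relying on the complementary striped list in Theorem \ref{SM-G}, while you carry out the inspection of which members of $\mathcal{F}\cup\mathcal{G}^*$ possess an internal triangle. Your reduction (an internal triangle rules out $F_{n-1}$, then discard the striped members $F(1;n)$, $F(n-2;n)$, $G_1(1)$, $G_2(t)$) is exactly the intended argument.

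The genuine gap is the reverse inequality $gp(GSF_n)\le k+2$ for $n\ge 8$, which is the bulk of the paper's proof and which you only sketch. You correctly isolate the forcing mechanism ($h_i\in I_G(v_{i-1},v_i)$, so an inner vertex of $R$ kills a flanking outer vertex; no three consecutive cycle vertices in $R$; no triangle in $G[R]$ by Lemma \ref{GP-S}), but you then announce a per-vertex charging scheme whose verification "through every local configuration" you explicitly defer --- and that verification is precisely the content of the claim. Moreover your proposed invariant, that retaining any inner vertex yields a net non-positive change against the all-outer set, is not obviously well-founded: a single isolated inner vertex a priori trades only one-for-one with a flanking outer vertex (net change $0$), so the bound must come from controlling interactions among several retained vertices, which your sketch does not do. The paper instead runs a global count keyed to adjacent pairs: assuming $|R|\ge k+3$, pigeonhole on the Hamiltonian cycle $H$ yields $m\ge 1$ pairs $(x_i,y_i)$ of $H$-adjacent vertices in $R$; its Claim~1 shows that for each pair the three cycle vertices preceding $x_i$ (symmetrically, following $y_i$) are excluded from $R$, leaving at most $n-5m$ available vertices outside the pairs; these fall into $m$ arcs of $H$ in which no two consecutive vertices lie in $R$, whence $|R|\le \sum_{i=1}^{m}\lceil n_i/2\rceil + 2m \le \frac{n-4m}{2}+2m=\frac{n}{2}=k+2$ when $n$ is even (the odd case is analogous), a contradiction. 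Some such executed count is what your proposal still lacks; note also that for odd $n$ the Hamiltonian cycle of $GSF_n$ is not perfectly alternating (there is one pair of consecutive inner vertices), so any purely local alternation argument needs a separate case for that defect.
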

\begin{proof}
Let $H$ be a Hamiltonian cycle of $G$ with $V(H)=\{h_1,h_2,\ldots,h_n\}$ and $D_2$ be the set of 2-vertices of $G$. By Lemma \ref{In-k}, we can get $|D_2|=k+2$.
It is easy to verify that $D_2$ is a general position set of $G$, thus we have $gp(G)\geq k+2$.
And in view of Theorem \ref{F^*-I-N}, the upper bound holds clearly and its corresponding extremal graphs are characterized.

Assume that $G\cong GSF_n$ satisfies $n\geq 8$ in the remaining proof. Then we will prove that $gp(G)=k+2$ in what follows. Let $R$ be a gp-set of $G$.
By the above argument, we obtain $|R|\geq k+2$. Next, it suffices to prove that $|R|\leq k+2$. The proof is simple if $8\leq n\leq 15$, so we may let $n\geq 16$ in the following.

Conversely, assume that $|R|\geq k+3\geq 4$.
Then it implies that there must be at least one pair of adjacent vertices of $H$ belonging to $R$.
Suppose that $R$ contains $m$ pairs of adjacent vertices on $H$, and let $(x_i,y_i)$ be the $i$-th pair of adjacent vertices of $H$ in $R$ with clockwise order.
Let $a_1$, $a_2$, $a_3$ and $a_4$ be the four consecutive vertices of $H$. Define $V_3^{c}(a_4)=\{a_1, a_2, a_3\}$ and $V_3(a_1)=\{a_2, a_3, a_4\}$ on $H$, respectively.
For convenience, set $\overline{D}_2=V(H)\setminus D_2$. Based on the parity of $n$, we divide into the following two cases to prove this result.

\medskip\noindent
{\bf  Case 1.} $n$ is even.\\
In this case, it follows from Lemmas \ref{In-k} and \ref{I-U} that $k+2=\frac{n}{2}$.
So we may assume, without loss of generality, that $D_2=\{h_{2j}:j\in[k+2]\}$.
Take $\overline{D}_2=\{h_{2\ell-1}:\ell\in[k+2]\}$.
The following claim will be useful to our main proof.

\medskip\noindent
{\bf Claim 1.} For any $i\in[m]$, if $x_i\in \overline{D}_2$, then $V_3^{c}(x_i)\cap R=\emptyset$.\\
Suppose without loss of generality that $x_i=h_1$, thus we have $V_3^{c}(h_1)=\{h_n,h_{n-1},h_{n-2}\}$ and $h_{n-1}\in \overline{D}_2$.
It is obvious that $h_n\not\in R$ since $d_G(h_n,h_1)=1$ and $d_G(h_n,h_2)=2$.
By the definition of $GSF_n$, it implies that $d_G(h_{n-1},h_1)=1$ as $\{h_{n-1}, h_1\}\subseteq\overline{D}_2$.
Since $N_G(h_2)=\{h_1,h_3\}$, we have $d_G(h_2,h_{n-1})=2$.
Hence $h_{n-1}\not\in R$. Similarly, $h_{n-2}\not\in R$. We complete this claim.

Analogously, $V_3(y_i)\cap R=\emptyset$ if $y_i\in \overline{D}_2$ for any $i\in[m]$.
Let $M(G)$ be the set of potential vertices of $R$ in $G$, i.e., the vertex $u\in M(G)$ is probably in $R$.
Take $\overline{M}(G)=V(G)\setminus M(G)$. Then we observe that $v\not\in R$ if $v\in\overline{M}(G)$.
Next, we will prove that $|M(G)|\leq n-3m$.

Without loss of generality, let $x_1$, $x_2$,\ldots,$x_m$ lie on $H$ with clockwise order.
By Claim 1,  it is obvious that $|M(G)|<n-3$ if $m=1$.
Then assume that $m\geq 2$.
For any $i\in[m]$, if $x_i\in \overline{D}_2$, it implies that $y_i\in D_2$, and we have $V_3^{c}(x_i)\cap V_3^{c}(x_{i+1})=\emptyset$ since Claim 1.
Thus $|M(G)|< n-3m$.

While there exists one $i\in [m-1]$ such that $x_i\in\overline{D}_2$ and $x_{i+1}\in D_2$, then we will prove that $|V_3^{c}(x_i)\cap V_3(y_{i+1})|\leq 1$.
Assume, without loss of generality, that $x_i=h_1$, thus we have $V_3^{c}(x_i)=\{h_n,h_{n-1},h_{n-2}\}$.
If $x_{i+1}\in D_2\setminus\{h_2,h_n,h_{n-2},h_{n-4},h_{n-6}\}$, it is easy to verify that $|V_3^{c}(x_i)\cap V_3(y_{i+1})|=0$.
And if $x_{i+1}=h_{n-4}$, then $y_{i+1}=h_{n-3}$ and $h_{n-3}\in R$.
Since $h_{n-3}\in \overline{D}_2$, we have $d_G(h_{n-3},h_1)\leq 2$.
 Note that $h_1\in I_G(h_2,h_{n-3})$ if $d_G(h_1,h_{n-3})=1$.
Then $d_G(h_{n-3},h_1)=2$. It implies that $d_G(h_{n-4},h_1)=2$ and $d_G(h_{n-4},h_2)=2$. And since $h_{n-4}$, $h_2$ are two 2-vertices of $H$ and $N_G(h_{n-4})\cap N_G(h_2)=\emptyset$, which leads to $d_G(h_{n-4},h_2)\geq 3$. It is impossible.
While $x_{i+1}=h_{n-6}$, then $y_{i+1}=h_{n-5}$ and $|V_3(y_{i+1})\cap V_3^{c}(x_i)|=1$.
By using Lemma \ref{GP-S}, it implies that $h_{n-7}\not\in R$. Then there must be at least three vertices of $H$ in $\overline{M}(G)$ if $x_i$, $y_i\in R$, for some $i\in [m]$.
It implies that $|M(G)|\leq n-3m$, completing this claim.

Suppose that these potential vertices can be divided into $m$ parts in $H$ and $n_i$ is the number of vertices for each part.
Then we have
\begin{equation*}
\begin{aligned}
|R|& \leq \lceil\frac{n_1}{2}\rceil+\lceil\frac{n_2}{2}\rceil+\cdots+\lceil\frac{n_m}{2}\rceil+2m\\
     &\leq \frac{n_1+1}{2}+\frac{n_2+1}{2}+\cdots+\frac{n_m+1}{2}+2m\\
     &= \frac{n-5m+m}{2}+2m\\
     &= k+2.
\end{aligned}
\end{equation*}
It contradicts with our assumption. Hence $|R|\leq k+2$, then we have $gp(G)=2k$.

\medskip\noindent
{\bf Case 2.} $n$ is odd.\\
In this case, we know that $|D_2|=k+2$. Without loss of generality, let $D_2=\{h_{2i+1}:i\in[k+2]\}$. Take $\overline{D}_2=\{h_{2i}:i\in[k+2]\}\cup\{h_1\}$.
Similar to the previous discussion of Case 1, we can get $|R|\leq k+2$, hence omitted here. Thus we have $gp(G)=k+2$ if $G\cong GSF_n$.
\end{proof}

\end{document}